\documentclass[11pt]{article}

\usepackage[T1]{fontenc}
\usepackage[utf8]{inputenc}
\usepackage{authblk}

\usepackage{amssymb,amsmath,amsthm}
\usepackage[colorlinks=true,linktoc=page,urlcolor=blue,
citecolor=red,linkcolor=blue,pdfpagelabels,bookmarksnumbered,bookmarksopen,pagebackref=true]{hyperref}
\usepackage{graphicx}
\usepackage[english]{babel}
\usepackage{epsfig}
\usepackage{graphics}

\usepackage{amsthm}
\theoremstyle{plain}

\newtheorem{lemma}{Lemma}[section]

\newtheorem{rem}[lemma]{Remark}

\theoremstyle{definition}

\theoremstyle{plain}
\newtheorem*{conj*}{Conjecture}
\newtheorem*{corollary*}{Corollary}

\newtheorem{prop}[lemma]{Proposition}
\makeatletter
\renewcommand*\l@section[2]{%
  \ifnum \c@tocdepth >\z@
    \setlength\@tempdima{1.5em}%
    \begingroup
      \parindent \z@ \rightskip \@tocrmarg
      \parfillskip -\rightskip
      \leavevmode
      \hangindent\@tempdima \hangafter\@ne
      #1\nobreak\hfil \nobreak\hb@xt@\@pnumwidth{\hss #2}\par
    \endgroup
  \fi}
\makeatother
\theoremstyle{plain}
\newtheorem{thmletter}{Theorem}

\def\div{\mathop{\mathrm{div}}}

\newcommand{\R}{{\mathbb R}}

\newcommand{\DR}{{\cal D}}

\newcommand{\mn}{\medbreak\noindent}

\def\bean#1\eean{\begin{eqnarray*}#1\end{eqnarray*}}

\begin{document}

\title{\bfseries\Large Unique continuation, nonexistence and bubbling\\ for the critical $p$-Laplace equation in the plane}
\author{Carlo Mercuri}
\affil{

Dipartimento di Scienze Fisiche, Informatiche e Matematiche\\\small 
Universit\`a di Modena e Reggio Emilia\\\small
Via Campi 213/b, 41125 Modena, Italy\\\small
\textit{carlo.mercuri@unimore.it}
}
\date{}

\maketitle
{\renewcommand{\thefootnote}{}\footnotetext{{\sc Keywords:} Unique continuation; Liouville type theorems; blow-up analysis and bubbling; loss of compactness.\\
{\sc 2020 Mathematics Subject Classification:} 35J92 (35B53, 35B60, 30C62).}}

\begin{abstract}
For $1<p<2$ we prove weak and strong unique continuation properties for the solutions to $\Delta_pu+f(u)=0$
in a planar domain, with $f$ continuous and such that $|f(s)|\leq C|s|^{p-1}$: a solution vanishing on
an open set vanishes identically, and so does a solution vanishing to infinite order
at a single point. We can therefore make progress towards a proof of some
long-standing nonexistence results available only for $p=2$, such as that of Esteban and Lions~\cite{EstebanLions}, establishing here that for the critical $p$-Laplace
equation with zero Dirichlet boundary condition on a half-plane, there are no nontrivial finite
energy solutions. In the plane, unique continuation thus provides the missing ingredient for a
generalisation to the $p$-Laplacian operator of a classical result of
Struwe~\cite{Struwe} on the bubble-profile decomposition of possibly sign-changing Palais--Smale sequences associated to the
Brezis--Nirenberg problem~\cite{BrezisNirenberg}, which remains open in dimension $N\geq3$ for
$p\neq2$. We obtain a characterisation of their loss of compactness in terms of the finite
energy solutions of $\Delta_pu+|u|^{p^*-2}u=0$ in $\R^2$.
\end{abstract}

\setcounter{tocdepth}{1}
\tableofcontents

\section{Introduction}
\label{sec:construction}

Let $\Omega\subset\R^2$ be open, $1<p<2$ and $f:\R\to\R$ continuous. A weak solution of
\begin{equation}
\label{eq:eq-f}
-\Delta_pu=f(u)\qquad\mbox{in }\Omega ,\qquad \Delta_pu=\div\big(|\nabla u|^{p-2}\nabla u\big),
\end{equation}
is a $u\in W^{1,p}_{{\textrm{loc}}}(\Omega)$ with
\begin{equation*}
\int_\Omega|\nabla u|^{p-2}\nabla u\cdot\nabla\varphi\,dx=\int_\Omega f(u)\varphi\,dx
\end{equation*}
for every $\varphi\in C_c^\infty(\Omega)$.

\noindent In this paper we prove weak and strong unique continuation properties for the solutions of \eqref{eq:eq-f}, allowing a zeroth-order term $f(u)$ not present in the classical results, available in fact only for $p$-harmonic functions; see Iwaniec and Manfredi~\cite{IwaniecManfredi} and Manfredi~\cite{Manfredi}. Our approach is in their spirit. In fact, setting $h=\partial_zu=\frac12(u_x-iu_y)$ for the complex gradient, we have $|\nabla u|=2|h|$, and the equation for $u$
becomes a first-order equation for $h$, of Beltrami type,
\begin{equation*}
\partial_{\bar z}h=\mu \partial_zh+\nu\overline{\partial_z h}+\alpha h+\beta u ,
\end{equation*}
with $\mu$ and $\nu$ measurable and
$|\mu|+|\nu|\leq\frac{2-p}p<1$, namely a uniform ellipticity property, and $\alpha$, $\beta$
bounded and measurable.

For $f=0$ the last two terms are absent, $\partial_{\bar z}h=\mu \partial_zh+\nu\overline{\partial_z h}$, and $h$ is
quasiregular: $|\partial_{\bar z}h|\leq\frac{2-p}p\,|\partial_zh|$ almost everywhere (Bojarski and
Iwaniec~\cite{BojarskiIwaniec} for $p\geq2$; Manfredi~\cite[Theorem~1]{Manfredi} and Iwaniec and
Manfredi~\cite[(12)--(13), p.~4]{IwaniecManfredi} for every $p>1$). Nonconstant quasiregular mappings are
discrete, and this is how Manfredi~\cite[Corollary 1]{Manfredi} remarkably obtained the
isolatedness of the critical points of a nonconstant $p$-harmonic function.

Unlike in the $p$-harmonic case, the source term $f(u)$ produces the two extra terms above. The
term $\beta u$ is the one which will involve most of the work, because in principle it is
independent of the unknown $h$, so that the resulting equation is not homogeneous. Even so, near
any point where $u$ vanishes without vanishing identically, we construct a suitable perturbation
of $h$ by a multiple of $u$ whose zeros are isolated, with arguments inspired by
Alessandrini~\cite{Alessandrini2012}.

To show that the weak unique continuation property holds, we can argue as follows. Let
\begin{equation*}
\Omega_0=\{z\in\Omega:\ u\equiv0\ \mbox{on some disk around }z\}
\end{equation*}
be the interior of the nodal set of $u$. By definition it is an open set, and it is nonempty as
soon as $u$ vanishes on some open subset of $\Omega$; if it is also closed in $\Omega$, by
connectedness we can finally obtain $\Omega_0=\Omega$, namely $u\equiv0$, as we wanted. The issue then boils down to proving $\Omega_0$ being a closed set, and we deliver this by observing that at an
accumulation point $z_0\in\Omega\setminus\Omega_0$ of $\Omega_0$, $u$ vanishes but not identically
on any disk around $z_0$. Let $(z_n)_{n\in\mathbb N}\subset\Omega_0$ with $z_n\to z_0$, and let
$B_R(z_0)$ be a disk around $z_0$ on which the perturbation of $h$ just mentioned is defined, its
zeros in $B_R(z_0)$ being isolated, as we shall see later. For $n$ large $z_n\in B_R(z_0)$, and $u\equiv0$ on a disk
$D\subset B_R(z_0)$ centred at $z_n$; there $h$ vanishes as well, and so does the
perturbation. Its zeros in $B_R(z_0)$ are then not isolated, a contradiction, so no such $z_0$
exists.

The aforementioned perturbation of $h$ via $u$ takes the form
\begin{equation*}
\mathcal R=h-vu ,
\end{equation*}
where $v$ is chosen so that the term in $u$ cancels. The condition on $v$ turns out to be a
Riccati-type equation, which can be solved on the disk $B_R(z_0)$ above provided $R$ is small
enough, and with such a $v$ the function $\mathcal R$ solves the homogeneous system
\begin{equation*}
\partial_{\bar z}\mathcal R=\mu \partial_z\mathcal R+\nu\overline{\partial_z \mathcal R}+A\mathcal R+B\overline{\mathcal R}
\end{equation*}
with $A$ and $B$ bounded. Moreover $\mathcal R$ does not vanish identically near $z_0$: otherwise
$\nabla u$ would be controlled by $u$, and by a Gronwall-type argument $u$ would vanish there.

A reason for reducing to such a system is because we can then take advantage of the representation theorem of Bers and
Nirenberg~\cite[p.~116]{BersNirenberg} (see also
Bojarski~\cite[Theorem 4.4, p.~477]{Bojarski}): $\mathcal R=\mathcal F\circ\chi$, with $\chi$ a
quasiconformal homeomorphism of the plane and $\mathcal F$ a holomorphic function times a
nonvanishing continuous factor. The zeros of $\mathcal R$ are then those of a holomorphic function,
carried by $\chi$, and therefore isolated --- which is what we used in the argument above. This
representation replaces Stoilow's factorization, used by Iwaniec and
Manfredi~\cite{IwaniecManfredi}, and the discreteness of quasiregular mappings, used by
Manfredi~\cite{Manfredi}.

For the strong unique continuation property, the same representation gives more useful information. In fact, near a zero, $\mathcal R$ vanishes like a
power of $|\chi(z)-\chi(z_0)|$, and $\chi$, being quasiconformal, does not contract distances
faster than a power; so $\mathcal R$ cannot vanish faster than a power of $|z-z_0|$. A Caccioppoli-type
inequality carries that bound from $\mathcal R$ over to $u$, which is the property of vanishing at finite order
of Theorem~\ref{thm:ucp}(ii).
\mn
With Theorem~\ref{thm:ucp} at hand, we can prove, for critical $p$-Laplace equations in the plane,
nonexistence results so far available only for $p=2$, such as that of Esteban and
Lions~\cite{EstebanLions}: for the critical $p$-Laplace equation with zero Dirichlet boundary
condition on a half-plane, there are no nontrivial finite energy solutions. This result has been proved for nonnegative solutions on the half-space $\R^N_+$ and all
$1<p<N$ in \cite[Theorem 1.1, p.~470]{MercuriWillem}, via the strong maximum principle of
V\'azquez~\cite[Theorem 5, p.~200]{Vazquez}. 
In the third section we also deal with tools, useful for the compactness analysis of Brezis--Nirenberg type problems~\cite{BrezisNirenberg} involving the $p$-Laplacian operator and critical nonlinearities. Loosely speaking, the aforementioned nonexistence result allows to obtain, for critical $p$-Laplace problems in the plane, a generalisation of Struwe's~\cite{Struwe} global compactness result for possibly sign-changing Palais--Smale sequences. Namely, a characterisation of their loss of
compactness given in terms of the bubble-type profile solutions to $\Delta_pu+|u|^{p^*-2}u=0$ in $\R^2.$ This result, in the case of $\R^2$ and $1<p<2,$ allows one to drop a hypothesis on the negative parts of the PS sequences used in \cite[Theorem 1.2, p.~471]{MercuriWillem} and some of its recent variants.

\subsection{Unique continuation for the $p$-Laplace equation with a zeroth-order term}

\begin{thmletter}[Unique continuation]
\label{thm:ucp}
Let $1<p<2$, let $\Omega\subset\R^2$ be open and connected, let $f:\R\to\R$ be
continuous with
\begin{equation}
\label{eq:f-growth}
|f(s)|\leq C_M|s|^{p-1}\qquad\mbox{for }|s|\leq M ,
\end{equation}
for every $M>0$, and let $u\in C^{1,\alpha}_{{\textrm{loc}}}(\Omega)$ be a weak solution of
\eqref{eq:eq-f}.
\begin{enumerate}
\item[(i)] If $u$ vanishes on a nonempty open subset of $\Omega$, then $u\equiv0$ in $\Omega$.
\item[(ii)] If $u\not\equiv0$, then for every $z_0\in\Omega$ there are $m\geq0$ and $c,\rho_0>0$ with
\begin{equation}
\label{eq:finite-order}
\int_{B_\rho(z_0)}|u|^p\,dx\ \geq\ c\,\rho^{m}\qquad\mbox{for }0<\rho<\rho_0 ,
\end{equation}
so that $u$ vanishes to a finite order at each of its zeros. In particular, a solution which vanishes to infinite order at one point, in the sense that the integral in \eqref{eq:finite-order} is
$O(\rho^j)$ as $\rho\to0$ for every $j\in\mathbb N$, vanishes identically.
\end{enumerate}
\end{thmletter}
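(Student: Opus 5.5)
We follow the strategy outlined in the Introduction and reduce both (i) and (ii) to a local statement near a point $z_0$ with $u(z_0)=0$ and $u\not\equiv0$ on every disk around $z_0$.

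\textbf{Step 1: the Beltrami equation for $h$.} Set $h=\partial_zu$. On the set where $\nabla u\neq0$ we have $u\in W^{2,2}_{\rm loc}$ (regularity for $1<p<2$), so we may expand $\Delta_pu=|\nabla u|^{p-2}\big(\Delta u+(p-2)|\nabla u|^{-2}\langle D^2u\nabla u,\nabla u\rangle\big)$. Written in complex notation, this gives $\partial_{\bar z}h=\mu\partial_zh+\nu\overline{\partial_zh}-\tfrac{1}{p}|2h|^{2-p}f(u)$ up to a harmless normalising constant, with $|\mu|+|\nu|\le\frac{2-p}{p}$. Using \eqref{eq:f-growth} with $M=\sup_{B_R}|u|$, we write $|2h|^{2-p}f(u)=\alpha h+\beta u$. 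Here $\beta=c\,|h|^{2-p}f(u)/u$ on $\{|u|\ge|h|\}$, which is bounded by $C|u|^{p-2}|h|^{2-p}\le C$. On $\{|u|<|h|\}$ we put the term into $\alpha h$ with $\alpha$ bounded, by the same estimate. On $\{\nabla u=0\}$ we set all coefficients to zero. The delicate point is to justify this equation in $W^{1,2}_{\rm loc}$ across the critical set. For this we regularise, i.e.\ use the approach of Iwaniec--Manfredi via $|h|^{(p-2)/2}h$-type weights, or approximate by nondegenerate equations and pass to the limit.

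\textbf{Step 2: the Riccati substitution.} Put $\mathcal R=h-vu$. Since $\partial_{\bar z}u=\bar h$, we get
\[
\partial_{\bar z}\mathcal R-\mu\partial_z\mathcal R-\nu\overline{\partial_z\mathcal R}=A\mathcal R+B\overline{\mathcal R}+u\,\big(\beta+\alpha v-\partial_{\bar z}v+\mu\partial_zv+\nu\overline{\partial_zv}\big)+(\text{terms in }v^2u,\ \bar v u).
\]
The remaining terms of the form $v\bar h$ and $\mu vh$ are rewritten using $h=\mathcal R+vu$. We choose $v$ so that the full coefficient of $u$ vanishes. This is a quasilinear Riccati-type Beltrami equation for $v$. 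We solve it on a small disk $B_R(z_0)$ by a contraction argument in $W^{1,q}$, $q>2$ close to $2$, using the boundedness of the Beurling transform on $L^q$ and Bojarski's condition $\frac{2-p}{p}\|S\|_q<1$. The quadratic term is small when $R$ is small. We expect this step, together with the justification in Step 1, to be the main obstacle. We then check that $\mathcal R\not\equiv0$ on each disk around $z_0$. Indeed, otherwise $|\nabla u|\le C|u|$ there, and integrating along rays from a point $z_1$ with $u(z_1)=0$ (for instance $z_0$) gives, by Gronwall, $u\equiv0$ near $z_0$, which is a contradiction.

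\textbf{Step 3: Bers--Nirenberg representation and conclusion.} The Bers--Nirenberg/Bojarski representation gives $\mathcal R=e^{s}\,(F\circ\chi)$, with $\chi$ quasiconformal, $F$ holomorphic and $s$ continuous. Hence the zeros of $\mathcal R$ are isolated, and (i) follows by the open--closed argument on $\Omega_0$ described above. For (ii) we may assume $u(z_0)=0$, since otherwise the bound is trivial with $m=2$. Then $|F(\chi(z))|\ge c|\chi(z)-\chi(z_0)|^k$, and by Mori's H\"older estimate for $\chi^{-1}$ we get $|\chi(z)-\chi(z_0)|\ge c|z-z_0|^{K}$. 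Hence $|\mathcal R(z)|\ge c|z-z_0|^{kK}$. Next, $|h|\le|\mathcal R|+C|u|$, and a Caccioppoli inequality for the equation gives $\int_{B_{\rho/2}}|\nabla u|^p\le C\rho^{-p}\int_{B_\rho}|u|^p$ (the zeroth-order term is absorbed by \eqref{eq:f-growth}). Combining these,
\[
c\,\rho^{2+pkK}\le\int_{B_{\rho/2}}|\mathcal R|^p\le C\int_{B_{\rho/2}}\big(|\nabla u|^p+|u|^p\big)\le C\rho^{-p}\int_{B_\rho}|u|^p,
\]
which yields \eqref{eq:finite-order} with $m=2+p+pkK$. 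Finally, if $u$ vanishes to infinite order at some point but $u\not\equiv0$, this contradicts \eqref{eq:finite-order}; so $u\equiv0$.
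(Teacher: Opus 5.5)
Your proposal is correct and follows the paper's proof essentially step by step. The shared steps are:
\begin{itemize}
\item the Beltrami system for $h=\partial_zu$ with bounded $\alpha,\beta$;
\item the Riccati substitution $\mathcal R=h-vu$, with $v$ obtained by a contraction on a small disk through Bojarski's inversion of the Beltrami operator;
\item the Gronwall argument showing $\mathcal R\not\equiv0$;
\item the Bers--Nirenberg/Bojarski representation plus the open--closed argument for (i);
\item Mori's H\"older bound for $\chi^{-1}$ combined with Caccioppoli for (ii), giving the same exponent ($m=2+p+pkK$ in your notation).
\end{itemize}
Your case split defining $\alpha,\beta$ and your treatment of $\nabla u(z_0)\neq0$ via $\mathcal R(z_0)\neq0$ are harmless variants.

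The two points you flag as obstacles are settled more simply in the paper. First, no regularisation is needed: Tolksdorf's weighted estimate gives $u\in W^{2,2}_{\mathrm{loc}}(\Omega)$ on all of $\Omega$, since $p<2$ and $\nabla u\in L^\infty_{\mathrm{loc}}$. Stampacchia's theorem then gives $D^2u=0$ a.e.\ on $\{\nabla u=0\}$, so the system holds there with zero coefficients. Second, the contraction is run in the unit ball of $L^\infty(B_R)$, where $\mathcal L^{-1}$ has norm at most $C_0R$. It is this factor $R$, not any smallness of the quadratic term, that makes the map contractive.
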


The growth assumption \eqref{eq:f-growth} is sharp, in the sense that counterexamples can be
constructed for nonlinearities with power growth $q<p-1$. Indeed, for the equation
$-\Delta_pu+\beta(u)=0$, with $\beta$ continuous, nondecreasing and $\beta(0)=0$, the strong
maximum principle for nonnegative solutions holds if and only if
$\int_{0^+}(\beta(s)s)^{-1/p}\,ds=\infty$, by V\'azquez~\cite{Vazquez}. For $\beta(u)=|u|^{q-1}u$ this integral is finite precisely when
$q<p-1$, and there are dead-core solutions, namely nontrivial nonnegative solutions vanishing on
an open set.

\begin{corollary*}
Let $1<p<2$, let $\Omega\subset\R^2$ be open and connected, let $\lambda\in\R$ and let $u$ be a weak
solution of $-\Delta_pu=\lambda|u|^{p-2}u$ in $\Omega$. If $u$ vanishes on a nonempty open subset of
$\Omega$, then $u\equiv0$; otherwise $u$ vanishes to finite order at each of its zeros.
\end{corollary*}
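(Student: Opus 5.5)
The Corollary follows from Theorem~\ref{thm:ucp} applied with $f(s)=\lambda|s|^{p-2}s$. This $f$ is continuous on $\R$, since $p>1$ gives $|s|^{p-2}s=|s|^{p-1}\,\mathrm{sgn}(s)\to0$ as $s\to0$. It also satisfies \eqref{eq:f-growth} with $C_M=|\lambda|$ for every $M>0$, because $|f(s)|=|\lambda|\,|s|^{p-1}$ identically.

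The only hypothesis of Theorem~\ref{thm:ucp} not immediate from the Corollary is the regularity $u\in C^{1,\alpha}_{\rm loc}(\Omega)$, since $u$ is only assumed to be a weak solution in $W^{1,p}_{\rm loc}(\Omega)$. I would obtain it in two steps.
\begin{enumerate}
\item[(a)] \emph{Local boundedness.} The equation $-\Delta_pu=\lambda|u|^{p-2}u$ has a right-hand side of growth exactly $p-1$, which is subcritical. In the plane with $1<p<2$ we have $p<N=2$, so Sobolev gives $u\in L^q_{\rm loc}$ for every $q<2p/(2-p)$. A standard Moser iteration, or De Giorgi's method (Serrin's local estimates for quasilinear equations whose lower-order term is controlled by $|u|^{p-1}$), then yields $u\in L^\infty_{\rm loc}(\Omega)$.
\item[(b)] \emph{H\"older gradient.} With $u$ locally bounded, the right-hand side $\lambda|u|^{p-2}u$ is in $L^\infty_{\rm loc}$. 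The $C^{1,\alpha}$ regularity theory of DiBenedetto and Tolksdorf (or Lieberman) for $-\Delta_pu=g\in L^\infty_{\rm loc}$ then gives $u\in C^{1,\alpha}_{\rm loc}(\Omega)$ for some $\alpha\in(0,1)$.
\end{enumerate}
I expect step (a) to be the main, though routine, obstacle. Serrin's framework applies directly because the lower-order term has the structure $|b(x,u)|\le|\lambda|\,|u|^{p-1}$. If $p$ is close to $1$ and one worries about the constants, one could first use a Brezis--Kato type bootstrap, writing the right-hand side as $V|u|^{p-2}u$ with $V=\lambda$ bounded.

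With regularity in hand, part (i) of Theorem~\ref{thm:ucp} gives the first alternative: if $u$ vanishes on a nonempty open set, then $u\equiv0$ in $\Omega$. If $u\not\equiv0$, part (ii) gives the bound \eqref{eq:finite-order} at every $z_0\in\Omega$, in particular at each zero of $u$. This is exactly the statement that $u$ vanishes to finite order there, which completes the proof.
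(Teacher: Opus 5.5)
Your proposal is correct and matches the paper's own one-line argument: apply Theorem~\ref{thm:ucp} with $f(s)=\lambda|s|^{p-2}s$, for which \eqref{eq:f-growth} holds with equality ($C_M=|\lambda|$), after noting that weak solutions are locally bounded and therefore $C^{1,\alpha}_{\rm loc}$ by DiBenedetto and Tolksdorf. Your regularity step spells out the details the paper leaves implicit, namely Serrin's local bound with the lower-order term controlled by $|\lambda||u|^{p-1}$, followed by $C^{1,\alpha}$ theory for a bounded right-hand side.
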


\noindent Indeed \eqref{eq:f-growth} holds with equality here, and such solutions are locally
bounded, hence $C^{1,\alpha}_{{\textrm{loc}}}$.

\begin{rem}
\label{rem:gradient}
The proof of Theorem~\ref{thm:ucp} is robust enough to accommodate more general equations, for
instance
\begin{equation*}
-\Delta_pu=f(x,u,\nabla u)\qquad\mbox{in }\Omega ,
\end{equation*}
with $f$ measurable in $x$ and continuous in $(u,\nabla u)$. The conclusions of
Theorem~\ref{thm:ucp} remain valid for weak solutions $u\in C^{1,\alpha}_{{\textrm{loc}}}(\Omega)$,
provided that for every $M>0$
\begin{equation*}
|f(x,s,\xi)|\leq C_M\big(|s|^{p-1}+|\xi|^{p-1}\big)\qquad\mbox{for a.e.\ }x\in\Omega,\ |s|\leq M,\ |\xi|\leq M .
\end{equation*}
We leave the analysis of these equations on planar domains out of this work.

\end{rem}

\subsubsection*{Related questions}
\begin{enumerate}
\item[1)] The proof of Theorem~\ref{thm:ucp} rests on a Beltrami-type system for the
complex gradient (see \eqref{eq:h-eq} below), that is, on the fact that $-\Delta_pu=f(u)$ is
a determined first-order system for $\nabla u$ in the plane. We do not know whether the
weak unique continuation property holds in dimension $N\geq3$, even for $f=0$. A promising
approach to higher dimensions would be to consider a symmetric version of
Theorem~\ref{thm:ucp}. In fact, for a solution invariant under a compact
group of isometries with two-dimensional orbit space, e.g.\ the axially symmetric solutions
$u=U(r,x_N)$, $r=|x'|$, the equation reduces to a planar $p$-Laplace equation with a
first-order term,
\begin{equation*}
-\Delta_pU=f(U)+\frac{N-2}{r}\,|\nabla U|^{p-2}\partial_rU
\end{equation*}
in the axial case, which is bounded by $C|\nabla U|^{p-1}$ away from the axis: this is an
equation of the type considered in Remark~\ref{rem:gradient}, to which the proof of
Theorem~\ref{thm:ucp} would apply.
\item[2)] Since the aforementioned perturbation $\mathcal R$ differs from $h$ by a multiple of $u$, whose size can be controlled
by several means, it is natural to expect that the simultaneous zeroes of $u$ and $\nabla u$ are isolated, by stability properties of the index (winding number/Brower's degree): a property which is in fact stronger than that we actually use here. For linear equations in the plane with Lipschitz continuous principal coefficients
and bounded lower-order ones, every critical point is isolated, $\partial_zu$ vanishes there with
finite multiplicity $m$, and the index of $\nabla u$ is $-m$; see Alessandrini and
Magnanini~\cite[p.~570]{AlessandriniMagnanini}.
\item[3)] The equation of Theorem~\ref{thm:ucp} is the stationary case of
$u_t=\Delta_pu+f(u)$ on $\Omega\times(0,T)$, and for the parabolic flow the corresponding question is
open: if $u$ vanishes on a nonempty open subset $\omega\times(t_1,t_2)$, does $u(\cdot,t)$
vanish in $\Omega$ for every $t\in(t_1,t_2)$? For $p=2$ the answer is positive. There
$c=f(u)/u$ is locally bounded and $u$ solves $\Delta u-u_t+cu=0$; a solution of such an
equation which vanishes on $\omega\times(0,T)$ vanishes on the whole cylinder, and at each
time $u(\cdot,t_0)$ either vanishes identically or has no zero of infinite order, the
analogue of \eqref{eq:finite-order} (Vessella~\cite{Vessella}).

For $1<p<2$ one may try to argue at a fixed time, the slice being a solution of the planar
equation $-\Delta_pu=f(u)-u_t$ with $u_t$ read as a source. Our approach to
Theorem~\ref{thm:ucp} requires that source so satisfy a similar bound that enjoyed by $f(u)$,
namely $|u_t|\,|\nabla u|^{2-p}\leq C(|u|+|\nabla u|)$ locally and uniformly in $t$. The term
$f(u)$ obeys it, by the growth \eqref{eq:f-growth} and Young's inequality,
$|u|^{p-1}|\nabla u|^{2-p}\leq(p-1)|u|+(2-p)|\nabla u|$. For an argument similar to the proof
of Theorem~\ref{thm:ucp} to be carried out, this would be a too severe demand on $u_t$, being
forced to vanish wherever $u$ and $\nabla u$ do simultaneously. Indeed, this is false even in
very common situations: for $p=2$ the solution $t+\frac14|x|^2$ of the heat equation has
$u=0$, $\nabla u=0$ and $u_t=1$ at the origin. 
\end{enumerate}

\subsection{Nonexistence for critical equations}

Let us now set in \eqref{eq:eq-f} $f(s)=|s|^{p^*-2}s$, where $1<p<N$ and
$p^*=\frac{Np}{N-p}$ is the critical Sobolev exponent. We consider hereafter only finite energy solutions satisfying a Dirichlet boundary condition. On the half-space
$H=\{x\in\R^N:\ x_N>0\}$ we therefore consider solutions in $\DR_0^{1,p}(H)$, the closure of
$C_c^\infty(H)$ in $\DR^{1,p}(\R^N)=\{u\in L^{p^*}(\R^N):\ \nabla u\in L^p(\R^N)\}$: a weak
solution of
\begin{equation}
\label{eq:main}
-\Delta_pu=|u|^{p^*-2}u\qquad\mbox{in }H
\end{equation}
is a $u\in\DR_0^{1,p}(H)$ with
$\int_H|\nabla u|^{p-2}\nabla u\cdot\nabla\varphi=\int_H|u|^{p^*-2}u\,\varphi$ for every
$\varphi\in\DR_0^{1,p}(H)$. We first establish a positive answer for $N=2$ to a conjecture raised in \cite[p.~3]{FarinaMercuriWillem}, by the following

\begin{thmletter}[Nonexistence on the half-plane]
\label{thm:main}
Let $N=2$ and $1<p<2$. Then every weak solution
$u\in\DR_0^{1,p}(H)$ of \eqref{eq:main} vanishes identically.
\end{thmletter}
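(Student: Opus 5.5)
The approach follows Esteban--Lions: a Pohozaev-type identity on the half-plane, combined with the unique continuation property of Theorem~\ref{thm:ucp}. The identity yields that $\partial_\nu u=0$ on $\partial H$, so $u$ has zero Cauchy data. Extending $u$ by zero across $\partial H$ then gives a solution on all of $\R^2$ that vanishes on an open set.

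\textbf{Regularity.} Let $u\in\DR_0^{1,p}(H)$ be a weak solution. By Moser/De Giorgi iteration in the critical setting (Brezis--Kato type argument, as in \cite{MercuriWillem}), $u\in L^\infty(H)$. Boundary regularity for the $p$-Laplacian (Lieberman) then gives $u\in C^{1,\alpha}_{\rm loc}(\overline H)$. Decay estimates at infinity give enough integrability of $|\nabla u|^p|x|$ and $|u|^{p^*}|x|$ on large half-disks to justify the limiting arguments below.

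\textbf{Pohozaev identity.} Test the equation with $x_2$-translations, i.e.\ with $\partial_2 u$ (the Esteban--Lions choice for a half-space, using the vector field $e_2$ rather than $x$), suitably truncated by cut-offs $\psi(x/R)$. Since $u=0$ on $\partial H$, there $\nabla u=(\partial_2u)e_2$. Integrating the Rellich identity
\begin{equation*}
\div\Big(|\nabla u|^{p-2}\nabla u\,\partial_2u-\tfrac1p|\nabla u|^pe_2+\tfrac1{p^*}|u|^{p^*}e_2\Big)=0
\end{equation*}
over $H$ leaves only the boundary term
\begin{equation*}
\Big(1-\tfrac1p\Big)\int_{\partial H}|\partial_2u|^p\,dx_1=0,
\end{equation*}
provided the cut-off remainder terms tend to $0$ as $R\to\infty$, which follows from the finite energy. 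Hence $\nabla u=0$ on $\partial H$.

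The rigorous justification is the \textbf{main obstacle}. The Rellich identity requires $|\nabla u|^{p-2}\nabla u\in W^{1,2}_{\rm loc}$-type regularity up to the boundary. For $1<p<2$ the relevant regularity of the flux $|\nabla u|^{p-2}\nabla u$ is available in $W^{1,2}_{\rm loc}$ (Lou, Cianchi--Maz'ya), but near the boundary I would instead use a difference-quotient version, as in Degiovanni--Musesti--Squassina. Concretely, I would compute $\frac{d}{dt}$ of the energy functional along the translations $u(x_1,x_2+t)$, restricted to $H$. For $t>0$ the translated function is admissible only after the shift, so the one-sided derivative produces exactly the boundary flux term. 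The convexity of $\xi\mapsto|\xi|^p/p$ provides the inequalities needed to pass to the limit.

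\textbf{Conclusion.} Define $\tilde u=u$ in $H$ and $\tilde u=0$ in $\R^2\setminus H$. Since $u=0$ and $\nabla u=0$ on $\partial H$, $\tilde u\in C^1(\R^2)$. For $\varphi\in C_c^\infty(\R^2)$, integrating by parts on $H$ produces a boundary term involving the flux $|\nabla u|^{p-2}\partial_2u$, which vanishes. Hence $\tilde u$ is a weak solution of $-\Delta_p\tilde u=|\tilde u|^{p^*-2}\tilde u$ in $\R^2$. It lies in $C^{1,\alpha}_{\rm loc}$ by regularity, since it is bounded.

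The nonlinearity $f(s)=|s|^{p^*-2}s$ satisfies \eqref{eq:f-growth} because $p^*-1>p-1$. Since $\tilde u$ vanishes on the open lower half-plane, Theorem~\ref{thm:ucp}(i) gives $\tilde u\equiv0$, hence $u\equiv0$.
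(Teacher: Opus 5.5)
Your proposal is correct and follows essentially the paper's route. The paper takes $u\in C^{1,\alpha}_{\mathrm{loc}}(\overline H)$ and $\partial_2u=0$ on $\partial H$ from Lemma~\ref{lem:normal}, which is proved in \cite{MercuriWillem} by a local Pohozaev identity of the kind you sketch; it then shows the zero extension is a weak solution on $\R^2$ via the cut-off $\eta(kx_2)$, and concludes by Theorem~\ref{thm:ucp}(i).

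The obstacle you flag is milder than you fear. Interior regularity ($u\in W^{2,2}_{\mathrm{loc}}(H)$, flux in $W^{1,2}_{\mathrm{loc}}(H)$) together with $u\in C^1(\overline H)$ already justifies your identity: test it against $\eta(kx_2)\psi(x/R)$, exactly as in the extension step. With the field $e_2$, finite energy alone makes the remainders vanish, so the weighted decay bounds on $|\nabla u|^p|x|$ and $|u|^{p^*}|x|$ that you invoke are unnecessary.
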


This theorem is proved for nonnegative solutions, for every $1<p<N$, in
\cite[Theorem 1.1, p.~470]{MercuriWillem}. For $p=2$ it holds for every solution, by a classical result of Esteban and Lions \cite[Theorem I.1, p.~2]{EstebanLions}, proved by means of a weak unique continuation principle, which for $1<p<2$, is provided by Theorem~\ref{thm:ucp}.

In fact, Theorem~\ref{thm:main} follows from Theorem~\ref{thm:ucp}(i) in three steps. The normal
derivative $\partial_Nu=\partial u/\partial x_N$ vanishes on $\partial H$; hence the extension
by zero, $v=u$ on $H$ and $v=0$ on $\R^N\setminus H$, is a weak solution of
$-\Delta_pv=|v|^{p^*-2}v$ in $\R^N$, vanishing on the open set $\R^N\setminus\overline H$; and
for $N=2$, Theorem~\ref{thm:ucp}(i) then gives $v\equiv0$.

For a bounded domain $\Omega\subset\R^N$, let us consider $W_0^{1,p}(\Omega),$ namely the closure of $C_c^\infty(\Omega)$ in
$W^{1,p}(\Omega).$ Weak solutions of $-\Delta_pu=|u|^{p^*-2}u$ in $\Omega$ are defined as for
\eqref{eq:main}, with $\Omega$ and $W_0^{1,p}(\Omega)$ in place of $H$ and $\DR_0^{1,p}(H)$.
We have the following nonexistence result.
\begin{thmletter}[Nonexistence in star-shaped domains]
\label{thm:star}
Let $1<p<2$ and let $\Omega\subset\R^2$ be a bounded domain with $C^2$ boundary, star-shaped with
respect to a point $x_0\in \Omega$:
\begin{equation}
\label{eq:star-shaped}
(x-x_0)\cdot\nu(x)\geq0\qquad\mbox{for every }x\in\partial \Omega ,
\end{equation}
$\nu$ the outer unit normal. Then every weak solution $u\in W_0^{1,p}(\Omega)$ of
\begin{equation}
\label{eq:eq-Omega}
-\Delta_pu=|u|^{p^*-2}u\qquad\mbox{in }\Omega
\end{equation}
vanishes identically.
\end{thmletter}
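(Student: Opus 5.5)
Our plan is the classical Pohozaev argument, with Theorem~\ref{thm:ucp}(i) closing the gap left by the boundary term. First, regularity: since $N=2$ and $1<p<2$, the exponent $p^*=2p/(2-p)$ is critical. By a Brezis--Kato/Moser iteration adapted to the $p$-Laplacian (as in Guedda--Véron), a weak solution $u\in W_0^{1,p}(\Omega)$ lies in $L^\infty(\Omega)$. Then Lieberman's boundary regularity gives $u\in C^{1,\alpha}(\overline\Omega)$, because $\partial\Omega$ is $C^2$.

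Second, the Pohozaev identity for the $p$-Laplacian (Guedda--Véron; Degiovanni--Musesti--Squassina for $C^1$ solutions without second derivatives). We test the equation against $(x-x_0)\cdot\nabla u$, with the smooth-approximation justification of those references. With $F(s)=|s|^{p^*}/p^*$ and $N=2$ this yields
\begin{equation*}
\frac{N-p}{p}\int_\Omega|\nabla u|^p-N\int_\Omega F(u)=-\frac{p-1}{p}\int_{\partial\Omega}|\nabla u|^p\,(x-x_0)\cdot\nu\,d\sigma ,
\end{equation*}
where we use that $\nabla u=(\partial_\nu u)\nu$ on $\partial\Omega$. Testing the equation against $u$ gives $\int_\Omega|\nabla u|^p=\int_\Omega|u|^{p^*}$. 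Since $N/p^*=(N-p)/p$, the left-hand side vanishes. Hence
\begin{equation*}
\int_{\partial\Omega}|\partial_\nu u|^p\,(x-x_0)\cdot\nu\,d\sigma=0 .
\end{equation*}
Because of \eqref{eq:star-shaped}, $\partial_\nu u=0$ wherever $(x-x_0)\cdot\nu>0$.

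Third, unique continuation. For a $C^2$ boundary, star-shapedness with respect to the interior point $x_0$ forces $(x-x_0)\cdot\nu>0$ on a nonempty relatively open arc $\Gamma\subset\partial\Omega$. For instance, at a point of $\partial\Omega$ farthest from $x_0$ the normal is parallel to $x-x_0$, so the quantity is strictly positive there and, by continuity, on a neighbourhood. On $\Gamma$ both $u$ and $\nabla u$ vanish. Take a small disk $D$ centred on $\Gamma$ and set $\tilde\Omega=\Omega\cup D$, which is connected. Extend $u$ by zero to $D\setminus\Omega$. As in the three-step argument for Theorem~\ref{thm:main}, the vanishing Cauchy data on $\Gamma$ make the boundary flux $|\nabla u|^{p-2}\nabla u\cdot\nu$ zero, so the extension is a $C^{1,\alpha}$ weak solution of $-\Delta_p v=|v|^{p^*-2}v$ in $\tilde\Omega$. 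It vanishes on the open set $D\setminus\overline\Omega$. The nonlinearity $f(s)=|s|^{p^*-2}s$ satisfies \eqref{eq:f-growth} because $p^*>p$. Theorem~\ref{thm:ucp}(i) therefore gives $v\equiv0$, hence $u\equiv0$.

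The main obstacle is the rigorous Pohozaev identity, since $u$ is only $C^{1,\alpha}$ and the operator is singular for $p<2$. We plan to invoke the known identity for $C^1(\overline\Omega)$ solutions of $p$-Laplace equations, or else regularise via $\mathrm{div}((\varepsilon+|\nabla u_\varepsilon|^2)^{(p-2)/2}\nabla u_\varepsilon)$ and pass to the limit. A secondary check is that $D\setminus\overline\Omega$ is nonempty and open and that the glued function is a weak solution across $\Gamma$. Both follow once we choose $D$ small enough that $\partial\Omega\cap D\subset\Gamma$ is a $C^2$ graph.
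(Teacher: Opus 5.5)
Your proposal is correct and follows essentially the same route as the paper. You use $C^{1,\alpha}(\overline\Omega)$ regularity and the Pohozaev identity, whose interior term vanishes at the critical exponent, to force $\nabla u=0$ on the nonempty arc where $(x-x_0)\cdot\nu>0$ (located via a farthest boundary point); the zero extension across a small disk centred on that arc is then a weak solution on the connected set $\Omega\cup D$ vanishing on an open set, and Theorem~\ref{thm:ucp}(i) concludes, the only differences from the paper being the references you invoke for regularity and for the Pohozaev identity.
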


For $p=2$ this is Pohozaev's classical theorem \cite{Pohozaev}, as recalled in
Guedda and V\'eron~\cite[p.~879]{GueddaVeron}, proved for sign-changing solutions by the Pohozaev identity and
unique continuation (see e.g. Heinz \cite{Heinz}) in Struwe~\cite[Theorem III.1.3]{StruweBook}, which is the scheme
followed here; for the nonlinearity $|u|^{p^*-1}$, which forces
$u\geq0$, and every $1<p<N$ it is due to Guedda and V\'eron~\cite[Corollary 1.3, p.~886]{GueddaVeron}. For radial solutions the question has been solved for every $1<p<N$: a radial
$u\in W_0^{1,p}(B)$ solution to $-\Delta_pu=|u|^{p^*-2}u$ in a ball $B,$ vanishes identically by \cite[Proposition 1.2, p.~4]{FarinaMercuriWillem}, which in turn extended to the whole range
$1<p<N$ the radial nonexistence known for $\frac{2N}{N+2}\leq p\leq2$ from \cite[p.~482]{MercuriWillem}.

\subsection{Blow-up analysis of Palais--Smale sequences for critical boundary value problems}

Let now $\Omega\subset\R^2$ be a smooth bounded domain, 
$a\in L^{2/p}(\Omega),$ $\|u\|=\|\nabla u\|_{L^p}$ for all $u\in W_0^{1,p}(\Omega)$
and $u\in \DR^{1,p}(\R^2)$, and 
\begin{equation}
\label{eq:functionals}
\begin{split}
J_\Omega(u)&=\int_\Omega\frac{|\nabla u|^p}p+a\frac{|u|^p}p-\frac{|u|^{p^*}}{p^*}\,dx,
\qquad u\in W_0^{1,p}(\Omega),\\
J_{\R^2}(u)&=\int_{\R^2}\frac{|\nabla u|^p}p-\frac{|u|^{p^*}}{p^*}\,dx,
\qquad u\in\DR^{1,p}(\R^2).
\end{split}
\end{equation}
A Palais--Smale sequence for $J_\Omega$ is a sequence $(u_n)_{n\in\mathbb N}\subset W_0^{1,p}(\Omega)$ along which
$J_\Omega$ is bounded and $J_\Omega'$ tends to $0$ in $W^{-1,p'}(\Omega)$,
$p'=\frac p{p-1}.$ We say that a Palais-Smale sequence $(u_n)_{n\in\mathbb N}$ for $J_\Omega$ is at a level $c\in\R$ when $J_\Omega(u_n)\to c$.

\begin{thmletter}[Global compactness]
\label{thm:compact}
Let $1<p<2$, let $\Omega$, $a$ be as above and let $(u_n)_{n\in\mathbb N}$ be a Palais--Smale
sequence for $J_\Omega$ at a level $c$. Then, passing to a subsequence, there are an integer
$k\geq0$, a solution $v_0\in W_0^{1,p}(\Omega)$ of
\begin{equation*}
-\Delta_pu+a|u|^{p-2}u=|u|^{p^*-2}u\qquad\mbox{in }\Omega ,
\end{equation*}
nontrivial solutions $v_1,\dots,v_k\in\DR^{1,p}(\R^2)$ of
\begin{equation*}
-\Delta_pu=|u|^{p^*-2}u\qquad\mbox{in }\R^2 ,
\end{equation*}
and sequences $(\varepsilon_n^i)_{n\in\mathbb N}\subset(0,\infty)$, $(y_n^i)_{n\in\mathbb N}\subset\Omega$,
$i=1,\dots,k$, with
\begin{equation*}
\varepsilon_n^i\to0,\qquad
\frac{\operatorname{dist}(y_n^i,\partial\Omega)}{\varepsilon_n^i}\to\infty ,
\end{equation*}
such that, as $n\to\infty$,
\begin{equation}
\label{eq:decomposition}
\begin{split}
&\Big\|u_n-v_0-\sum_{i=1}^k(\varepsilon_n^i)^{\frac{p-2}p}\,v_i\Big(\frac{\cdot-y_n^i}{\varepsilon_n^i}\Big)\Big\|\to0,\\
&\|u_n\|^p\to\|v_0\|^p+\sum_{i=1}^k\|v_i\|^p,\\
&J_\Omega(v_0)+\sum_{i=1}^kJ_{\R^2}(v_i)=c .
\end{split}
\end{equation}
In particular, no limiting profile-solution to the critical half-plane equation occurs.
\end{thmletter}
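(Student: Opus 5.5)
The proof follows the scheme of Struwe's global compactness argument, adapted to the $p$-Laplacian as in \cite[Theorem 1.2]{MercuriWillem}. Theorem~\ref{thm:main} is used to exclude half-plane profiles, so no sign assumption on $u_n^-$ is needed.

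First, Palais--Smale sequences are bounded. Combining $pJ_\Omega(u_n)-\langle J_\Omega'(u_n),u_n\rangle$ with $p<p^*$ shows that $\int|u_n|^{p^*}$ is controlled by $1+o(1)\|u_n\|$. Together with the bound on $J_\Omega$ and the term $a|u|^p$ (with $a\in L^{2/p}$, which is compact relative to $\|\cdot\|$ on bounded sets via H\"older and Rellich), this gives boundedness in $W_0^{1,p}(\Omega)$. Passing to a subsequence, $u_n\rightharpoonup v_0$. The key technical step here is a.e.\ convergence of the gradients, $\nabla u_n\to\nabla v_0$ a.e. I would obtain it by the Boccardo--Murat truncation argument, which is available since $J'_\Omega(u_n)\to0$. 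Given this, $v_0$ solves the limit equation. Brezis--Lieb splitting then shows that $w_n=u_n-v_0$ is a Palais--Smale sequence for $J_{\R^2}$ (extended by zero) at level $c-J_\Omega(v_0)$, with $\|u_n\|^p=\|v_0\|^p+\|w_n\|^p+o(1)$. The $a$-term disappears because $w_n\to0$ in $L^p$.

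Second, the iterative extraction. If $w_n\to0$ strongly we stop with $k=0$. Otherwise, since $J_{\R^2}'(w_n)\to0$, we have $\|w_n\|^p-\int|w_n|^{p^*}\to0$, so $\liminf\int|w_n|^{p^*}\ge S^{N/p}$ with $N=2$. I would use a Lévy concentration function $Q_n(r)=\sup_y\int_{B_r(y)}|w_n|^{p^*}$ and pick $\varepsilon_n,y_n$ such that $Q_n(\varepsilon_n)=\delta$, with $\delta$ small. The rescalings $\tilde w_n=\varepsilon_n^{(2-p)/p}w_n(\varepsilon_n\cdot+y_n)$ are bounded in $\DR^{1,p}(\R^2)$ and converge weakly to some $v_1$. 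Using the smallness of $\delta$ and an $\varepsilon$-regularity (Caccioppoli plus Sobolev) argument, $v_1\neq0$. Since $w_n\rightharpoonup0$, we get $\varepsilon_n\to0$.

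Two cases arise for $d_n=\operatorname{dist}(y_n,\partial\Omega)/\varepsilon_n$. If $d_n\to\infty$, the rescaled domains exhaust $\R^2$, and $v_1$ is a nontrivial finite-energy solution on $\R^2$; again a.e.\ convergence of gradients is needed to pass to the limit in the nonlinear operator. If $d_n$ stays bounded, then after a rotation the rescaled domains converge to a half-plane (using smoothness of $\partial\Omega$), and $v_1\in\DR_0^{1,p}(H)$ is a nontrivial weak solution of \eqref{eq:main}. Theorem~\ref{thm:main} forbids this, so this case cannot occur. Finally, $w_n-\varepsilon_n^{(p-2)/p}v_1((\cdot-y_n)/\varepsilon_n)$ is again a Palais--Smale sequence at a level lowered by $J_{\R^2}(v_1)$, and its norm is reduced by $\|v_1\|^p$. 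Each nontrivial solution on $\R^2$ satisfies $J_{\R^2}(v_i)\ge\frac1{2}S^{2/p}>0$, so the procedure terminates after finitely many steps. The three relations in \eqref{eq:decomposition} follow by summing.

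The main obstacle is the strong convergence of gradients, needed both for passing to the limit in $|\nabla u|^{p-2}\nabla u$ and for the Brezis--Lieb-type splitting of $\|\cdot\|^p$ and of $J'$. For $1<p<2$ the operator is not uniformly monotone in the quadratic sense. I would therefore rely on local a.e.\ convergence from truncation, $-\Delta_p$ tested against $T_\tau(w_n-w)\phi$, together with the inequality $(|a|^{p-2}a-|b|^{p-2}b)\cdot(a-b)\ge c|a-b|^2(|a|+|b|)^{p-2}$. Boundary-adjacent concentration is the only place where sign-changing sequences previously caused difficulty, and Theorem~\ref{thm:main} removes it.
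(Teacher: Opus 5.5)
Your proposal is correct and follows essentially the paper's route: the paper also defers to the Struwe-type iteration of Mercuri--Willem, adding only the Brezis--Nirenberg boundedness argument and using Theorem~\ref{thm:main} to exclude the case where $\operatorname{dist}(y_n,\partial\Omega)/\varepsilon_n$ stays bounded. The one step to tighten is boundedness: what makes it work is not compactness on bounded sets but the direct H\"older bound $\bigl|\int_\Omega a|u_n|^p\bigr|\le\|a\|_{L^{2/p}(\Omega)}\|u_n\|_{L^{p^*}(\Omega)}^p\le C(1+\|u_n\|)^{p/p^*}$, which together with $\int_\Omega|u_n|^{p^*}\le C(1+\|u_n\|)$ yields $\|u_n\|^p\le C(1+\|u_n\|)$ and hence boundedness since $p>1$.
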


For $p=2$ and $N\geq3$ this is Struwe's celebrated global compactness result \cite{Struwe}; see also Willem~\cite[Theorem 8.13]{Willem}. Pioneering blow-up analysis of Palais--Smale sequences
at points of concentration goes back to Sacks and Uhlenbeck
\cite{SacksUhlenbeck} and to Wente \cite{Wente}. Struwe's global compactness result extends the
concentration-compactness principle of P.-L. Lions~\cite{Lions1984,Lions1985} from minimization to minimax problems, the well-known link between the two being the celebrated variational principle of
Ekeland~\cite{Ekeland}: every minimizing sequence of a $C^1$ functional bounded below on a
Banach space lies at vanishing distance from a Palais--Smale sequence at the same level. For $1<p<N$ the above theorem is proved in \cite[Theorem 1.2, p.~471]{MercuriWillem} under the additional hypothesis $\|(u_n)^-\|_{L^{p^*}(\Omega)}\to0$, which is removed here in the plane, via Theorem \ref{thm:main}.

The summability assumption $a\in L^{2/p}(\Omega)$ is what makes the limiting
profiles of \eqref{eq:decomposition} solutions of the equation without the weight. Heuristically,
let $v\in C_c^\infty(\R^2)$ be supported in $B_R$ and consider the bubble
\begin{equation*}
v_n=\varepsilon_n^{\frac{p-2}p}\,v\Big(\frac{\,\cdot\,-y_n}{\varepsilon_n}\Big),
\qquad\varepsilon_n\to0 .
\end{equation*}
By Sobolev invariance, the gradient and the critical terms of $J_\Omega$ do not `see' the scaling, while the weight term
does:
\begin{equation}
\label{eq:weight-bubble}
\int_\Omega a\,|v_n|^p\,dx=\varepsilon_n^{\,p}\int a(y_n+\varepsilon_n y)\,|v(y)|^p\,dy .
\end{equation}
The factor $\varepsilon_n^{\,p}$ is exactly the one absorbed by the rescaling of the $L^{2/p}$
norm, so that H\"older's inequality gives
\begin{equation}
\label{eq:weight-holder}
\Big|\int_\Omega a\,|v_n|^p\,dx\Big|\ \leq\
\|v\|_{L^{p^*}(\R^2)}^p\ \|a\|_{L^{2/p}(B_{R\varepsilon_n}(y_n))}\ \longrightarrow\ 0 .
\end{equation}
Thus $L^{2/p}$ is the summability that makes the weight somehow invisible to the concentration. The threshold is
sharp: for $a=|x|^{-s}$, which lies in $L^{2/p}$ near the origin exactly when $s<p$, the
right-hand side of \eqref{eq:weight-bubble} is of order $\varepsilon_n^{\,p-s}$, so beyond $s=p$
one may expect limiting profiles of the type of those of the
Caffarelli--Kohn--Nirenberg equation \eqref{eq:CKN} below.

In fact, the challenge pops up if one weakens $a\in L^{2/p}(\Omega)$ to the scale-invariant condition that
\eqref{eq:weight-bubble} suggests,
\begin{equation*}
\lim_{r\to0}\ \sup_{y\in\overline\Omega}\ r^{\,p-2}\int_{B_r(y)\cap\Omega}a\,dx\ =\ 0 :
\end{equation*}
$\int_\Omega a\,|u|^p\,dx$ is not bounded for a general $u\in\DR^{1,p}(\R^2)$. One may then be led to consider hypotheses, for instance on some $a\geq 0$, in the form of
a weighted Poincar\'e inequality of Hardy type,
$\int_\Omega a\,|u|^p\,dx\leq C\int_\Omega|\nabla u|^p\,dx$; see the Fefferman--Phong
inequality~\cite{FeffermanPhong,Fefferman}.

When the domain has a symmetry with respect to a closed subgroup of $O(N)$, a similar
decomposition holds (\cite[Theorem 2.2]{MercuriPacella}): for $1<p<2=N,$ and
reasoning along the same lines of the above theorem, one can rule out that a limiting profile-solution to the critical half-plane equation occurs.
For $p=2$ it is the representation theorem of
Clapp~\cite{Clapp}, used in Clapp and Pacella~\cite[p.~587]{ClappPacella} to prove multiplicity results on symmetric domains with nontrivial topology. We will get back to the impact of symmetry in higher dimensional problem at the end of this Introduction.
\mn
In a functional setting which generalises that of Theorem~\ref{thm:compact} above without the
lower-order term,
Chernysh~\cite[Theorem 2, p.~3]{Chernysh} recently proved a representation theorem for arbitrary Palais--Smale sequences of
weighted critical $p$-Laplace equations, $1<p<N$. For $N=2$ and $1<p<2$,
Theorem~\ref{thm:main} rules out the limiting half-plane profile solutions, which in
Chernysh decomposition may arise only for equal weight exponents
(\cite[Theorem 1 and Theorem 2]{Chernysh}).

Let $1<p<2$, let $b<\frac{2-p}p$ and set
$\gamma=\frac{2-p(1+b)}p>0$. For an open $\mathcal O\subset\R^2$ let $\DR^{1,p}_b(\mathcal O,0)$ be the
completion of $C_c^\infty(\mathcal O)$ for the norm $\big(\int_{\mathcal O}|\nabla u|^p|x|^{-bp}dx\big)^{1/p}$,
and let
\begin{equation}
\label{eq:CKN}
-\div\big(|x|^{-bp}|\nabla u|^{p-2}\nabla u\big)=|x|^{-bp^*}|u|^{p^*-2}u
\qquad\mbox{in }\mathcal O ,\qquad u\in\DR^{1,p}_b(\mathcal O,0),
\end{equation}
with associated energy functional
\begin{equation*}
\widetilde J_{\mathcal O}(u)=\int_{\mathcal O}\frac{|\nabla u|^p}p\,|x|^{-bp}-\frac{|u|^{p^*}}{p^*}\,|x|^{-bp^*}\,dx.
\end{equation*}
 Equation \eqref{eq:CKN} is the Euler--Lagrange
equation for the optimisers for the Caffarelli, Kohn and
Nirenberg inequality~\cite{CaffarelliKohnNirenberg}, which here reads
$\big(\int_{\R^2}|u|^{p^*}|x|^{-bp^*}dx\big)^{1/p^*}\leq
C\big(\int_{\R^2}|\nabla u|^p|x|^{-bp}dx\big)^{1/p}$ for $u\in C^\infty_c(\R^2)$, and
makes $\widetilde J_{\mathcal O}$ well defined on $\DR^{1,p}_b(\mathcal O,0).$ On $\Omega$ the limiting profiles are of two kinds: solutions of
\eqref{eq:CKN} on $\mathcal O=\R^2$, which concentrate at the origin, and solutions of the unweighted
equation $-\Delta_pu=|u|^{p^*-2}u$, on $\R^2$ or on a half-plane, with energy $J_{\R^2}$ of
\eqref{eq:functionals}. The unweighted profiles are `brought' into the weighted
space by a transform ~\cite[p.~3]{Chernysh}: for $y\in\R^2$, $\lambda>0$ and
$\varphi\in\DR^{1,p}(\R^2)$,
\begin{equation*}
\begin{split}
&\tau_{y,\lambda}\varphi=\lambda^{-\gamma}\varphi\Big(\frac{\cdot-y}\lambda\Big)
\qquad\mbox{for }b=0,\\
&\tau_{y,\lambda}\varphi=\Big(\frac\lambda{|y|}\Big)^{-b}\lambda^{-\gamma}
\varphi\Big(\frac{\cdot-y}\lambda\Big)\,\eta\Big(\frac{2(\cdot-y)}{|y|}\Big)
\qquad\mbox{for }b\neq0,
\end{split}
\end{equation*}
$\eta\in C_c^\infty(\R^2;[0,1])$ supported in $B_1(0)$ and equal to $1$ on $B_{1/2}(0)$; for
$b=0$, $\gamma=\frac{2-p}p$ and $\tau_{y,\lambda}$ is the scaling of \eqref{eq:decomposition}.

\begin{thmletter}[Weighted equations in the plane]
\label{thm:chernysh}
Let $1<p<2$, let $\Omega\subset\R^2$ be a bounded $C^1$ domain containing the origin, let
$b<\frac{2-p}p$ and let $(u_n)_{n\in\mathbb N}$ be a Palais--Smale sequence for \eqref{eq:CKN} on
$\mathcal O=\Omega$ at a level $c$. Then, passing to a subsequence, there are integers $k,l\geq0$, a
solution $v_0\in\DR^{1,p}_b(\Omega,0)$ of \eqref{eq:CKN} on $\Omega$, nontrivial solutions
$w_1,\dots,w_k\in\DR^{1,p}_b(\R^2,0)$ of \eqref{eq:CKN} on $\R^2$, nontrivial solutions
$v_1,\dots,v_l\in\DR^{1,p}(\R^2)$ of
\begin{equation*}
-\Delta_pu=|u|^{p^*-2}u\qquad\mbox{in }\R^2 ,
\end{equation*}
and sequences $(\varrho_n^j)_{n\in\mathbb N}\subset(0,\infty)$, $j=1,\dots,k$, and
$(\varepsilon_n^i)_{n\in\mathbb N}\subset(0,\infty)$, $(y_n^i)_{n\in\mathbb N}\subset\Omega$, $i=1,\dots,l$, with
\begin{equation*}
\varrho_n^j\to0,\qquad \varepsilon_n^i\to0,\qquad
\frac{|y_n^i|}{\varepsilon_n^i}\to\infty,\qquad
\frac{\operatorname{dist}(y_n^i,\partial\Omega)}{\varepsilon_n^i}\to\infty ,
\end{equation*}
such that, as $n\to\infty$,
\begin{equation*}
\begin{split}
&\Big\|u_n-v_0-\sum_{j=1}^k(\varrho_n^j)^{-\gamma}w_j\Big(\frac\cdot{\varrho_n^j}\Big)
-\sum_{i=1}^l\tau_{y_n^i,\varepsilon_n^i}v_i\Big\|_{\DR^{1,p}_b(\R^2,0)}\to0 ,\\
&\|u_n\|^p_{\DR^{1,p}_b(\Omega,0)}\to\|v_0\|^p_{\DR^{1,p}_b(\Omega,0)}
+\sum_{j=1}^k\|w_j\|^p_{\DR^{1,p}_b(\R^2,0)}+\sum_{i=1}^l\|v_i\|^p ,\\
&\widetilde J_\Omega(v_0)+\sum_{j=1}^k\widetilde J_{\R^2}(w_j)+\sum_{i=1}^lJ_{\R^2}(v_i)=c .
\end{split}
\end{equation*}
\end{thmletter}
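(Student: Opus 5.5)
The plan is to take Chernysh's representation theorem for Palais--Smale sequences of the weighted critical problem \eqref{eq:CKN}, \cite[Theorem 1 and Theorem 2]{Chernysh}, as the starting point, specialised to $N=2$ and $1<p<2$. Applied to a Palais--Smale sequence $(u_n)$ for $\widetilde J_\Omega$ at level $c$, it yields, up to a subsequence, a weak limit $v_0$ solving \eqref{eq:CKN} on $\Omega$. It also yields finitely many profiles, with the energy splitting and the norm decomposition as in the statement. These profiles are of three kinds:
\begin{itemize}
\item[(a)] solutions $w_j$ of \eqref{eq:CKN} on $\R^2$, concentrating at the origin at scales $\varrho_n^j$;
\item[(b)] solutions $v_i$ of the unweighted equation on $\R^2$, transported by $\tau_{y_n^i,\varepsilon_n^i}$, with $|y_n^i|/\varepsilon_n^i\to\infty$ and $\operatorname{dist}(y_n^i,\partial\Omega)/\varepsilon_n^i\to\infty$;
\item[(c)] solutions of the unweighted equation on a half-plane, arising when $\operatorname{dist}(y_n^i,\partial\Omega)/\varepsilon_n^i$ stays bounded.
\end{itemize}
Profiles of type (c) appear only in the case of equal weight exponents, as recalled in the Introduction. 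They are also limits in $\DR^{1,p}$ of rescalings about boundary points, with the weight becoming constant in the limit because the concentration point stays away from the origin.

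The key step is to show that every type (c) profile vanishes, which removes it from the decomposition. Each such profile $v$ is, after a rigid motion, a weak solution in $\DR_0^{1,p}(H)$ of $-\Delta_pv=|v|^{p^*-2}v$. Here $H$ is a half-plane obtained as a blow-up limit of $\Omega$, which is possible because $\partial\Omega$ is $C^1$. Theorem~\ref{thm:main} then gives $v\equiv0$. The weight constant $(\lambda/|y|)^{-b}$ built into $\tau_{y,\lambda}$ is absorbed by the normalisation, since for $y$ away from $0$ the weight $|x|^{-bp}$ is locally constant at the concentration scale. So the limiting equation is exactly \eqref{eq:main} with $N=2$.

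One must then check that discarding these trivial profiles does not disturb the three relations in the statement. A zero profile contributes nothing to the norm sum, nothing to the energy sum, and nothing to the remainder. The remaining relations are therefore inherited verbatim from Chernysh's theorem.

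The hardest step is the compatibility of settings rather than the vanishing itself. Chernysh's half-plane profiles must lie in $\DR_0^{1,p}(H)$ and solve the equation in the weak sense used in Theorem~\ref{thm:main}. I would check this by rederiving how such a profile arises: it is a weak limit of $\tilde u_n=(\varepsilon_n)^{\gamma}u_n(y_n+\varepsilon_n\cdot)$, with the weight factor normalised, restricted to the rescaled domains $(\Omega-y_n)/\varepsilon_n$. These domains converge to $H$ locally in the Hausdorff sense, with a $C^1$ boundary that flattens. The weak limit lies in $\DR^{1,p}(\R^2)$ and vanishes a.e. outside $H$. Since $\partial H$ is a line and therefore negligible with the segment property, it belongs to $\DR_0^{1,p}(H)$. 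The equation on $H$ is obtained by testing with $\varphi\in C_c^\infty(H)$, whose support lies in the rescaled domains for large $n$. Passing to the limit in the nonlinear gradient term uses a.e. convergence of gradients, which is standard for Palais--Smale sequences of the $p$-Laplacian via the Boccardo--Murat type argument. Density of $C_c^\infty(H)$ then extends the equation to all test functions in $\DR_0^{1,p}(H)$.
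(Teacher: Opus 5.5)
Your proposal is correct and follows the paper's proof: the paper takes Chernysh's decomposition and observes that its half-plane profiles are nontrivial finite energy solutions of $-\Delta_pu=|u|^{p^*-2}u$ in $H$, which vanish by Theorem~\ref{thm:main}, so none can occur. Discarding these profiles as zero, as you do, is equivalent to that contradiction; your extra check that they lie in $\DR_0^{1,p}(H)$ and solve the unweighted equation (the factor $(\lambda/|y|)^{-b}$ in $\tau_{y,\lambda}$ normalising both weights near a boundary point away from the origin) is detail the paper takes directly from Chernysh's theorem.
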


It is well-known that results of the kind above, together with the classification of the entire
solutions of the critical equation in $\R^N$, recover compactness for certain ranges of the energy level $c$;
see e.g.\ \cite[Theorem 3.3]{MercuriPerera} and the references therein. The entire solutions
in question are the functions
\begin{equation}
\label{eq:AuTa}
u_{\varepsilon,y}(x)=\frac{c_{N,p}\,\varepsilon^{\frac{N-p}{p(p-1)}}}
{\big(\varepsilon^{\frac p{p-1}}+|x-y|^{\frac p{p-1}}\big)^{\frac{N-p}p}},
\qquad\varepsilon>0,\ y\in\R^N,
\end{equation}
$c_{N,p}$ chosen so that
$\int_{\R^N}|\nabla u_{\varepsilon,y}|^pdx=\int_{\R^N}u_{\varepsilon,y}^{p^*}dx=S_{N,p}^{N/p}$,
the extremals of the Sobolev inequality, where
\begin{equation*}
S_{N,p}\Big(\int_{\R^N}|u|^{p^*}dx\Big)^{\frac p{p^*}}\ \leq\ \int_{\R^N}|\nabla u|^p\,dx
\qquad\mbox{for every }u\in\DR^{1,p}(\R^N)
\end{equation*}
and $S_{N,p}$ is the optimal constant such that
\begin{equation*}
S_{N,p}=\inf\Big\{\int_{\R^N}|\nabla u|^p\,dx:\ u\in\DR^{1,p}(\R^N),\
\int_{\R^N}|u|^{p^*}dx=1\Big\} .
\end{equation*}
For a function attaining it the two integrals above coincide, and their common value is $S_{N,p}^{N/p}$ (Aubin~\cite{Aubin} and Talenti~\cite{Talenti}). For $p=2$
the classification is due to Caffarelli, Gidas and Spruck~\cite{CaffarelliGidasSpruck} (see also
Chen and Li~\cite{ChenLi}), after Gidas, Ni and Nirenberg~\cite{GidasNiNirenberg}; for the $p$-Laplacian, the uniqueness of the positive radial solution is
due to Guedda and V\'eron~\cite{GueddaVeronLocal}, and by Farina, Mercuri and
Willem~\cite[Proposition 1.3, pp.~4--5]{FarinaMercuriWillem} there are no other nontrivial radial
finite energy solutions, up to sign; the classification of positive solutions in $\DR^{1,p}(\R^N)$
was obtained for $\frac{2N}{N+2}\leq p<2$ by Damascelli, Merch\'an, Montoro and
Sciunzi~\cite{DamascelliMerchanMontoroSciunzi}, for $1<p<2$ by
V\'etois~\cite[Corollary 1.3, p.~151]{Vetois} and for $2<p<N$ by
Sciunzi~\cite[Theorem 1.1]{Sciunzi}.

\subsection{Remarks on nonexistence and compactness in higher dimension under axial symmetry}
\label{subsec:axial}

Let $N\geq3$, write $x=(x',x_N)\in\R^{N-1}\times\R$ and $r=|x'|$, and let $G=SO(N-1)\times\{1\}$
be the group of the rotations of $\R^N$ about the $x_N$-axis. We say that a function $u$ on a
$G$-invariant domain of $\R^N$ is \emph{axially symmetric} when $u(gx)=u(x)$ for every $g\in G$,
that is, when $u$ depends on $x'$ only through $|x'|$: $u(x)=U(r,x_N)$. We describe here the main
ideas by which the nonexistence and compactness results of this paper can be extended to such
solutions in higher dimension $N\geq3$, for $1<p<2$, as a result of a foliation argument which
reduces the problem to the weak unique continuation property of Theorem~\ref{thm:ucp}(i).

In cylindrical coordinates an axially symmetric solution $u=U(r,x_N)$ of
$-\Delta_pu=|u|^{p^*-2}u$ satisfies, on the leaf of the half-plane $\{r>0\}$, the weighted equation
\begin{equation*}
-\div\big(r^{N-2}|\nabla U|^{p-2}\nabla U\big)=r^{N-2}|U|^{p^*-2}U ,
\end{equation*}
where, acting on $U$, $\nabla$ and $\div$ are taken in the variables $(r,x_N)$. Expanding the
divergence, this is the planar $p$-Laplace equation with a first-order term,
\begin{equation*}
-\Delta_pU=|U|^{p^*-2}U+\frac{N-2}{r}\,|\nabla U|^{p-2}\partial_rU .
\end{equation*}
The weight $r^{N-2}$ is, up to a multiplicative constant, the $(N-2)$-dimensional measure of
$Gx=\{(y',x_N):\ |y'|=r\}$, namely the orbit under $G$ of a point $x$ with $|x'|=r$.
The regularity preliminaries we shall describe in Section~\ref{sec:reg} apply here, as the regularity
arguments do not depend on the dimension, that is,
$u\in C^{1,\alpha}_{{\textrm{loc}}}\cap W^{2,2}_{{\textrm{loc}}}$ in $\R^N$; hence $U$ inherits
the same regularity away from the axis, with $|D^2u|^2=|D^2U|^2+\frac{N-2}{r^2}\,(\partial_rU)^2$, and the weight $r^{N-2}$ of
the volume element is bounded above and below on compact subsets of $\{r>0\}$.

Away from the axis the reduced equation is of the type considered in Remark~\ref{rem:gradient}:
the first-order term is bounded by $\frac{N-2}{r_0}|\nabla U|^{p-1}$ on $\{r\geq r_0\}$, and the
nonlinearity $|U|^{p^*-2}U$ satisfies the growth condition \eqref{eq:f-growth} with
$C_M=M^{p^*-p}$, as in the proof of Theorem~\ref{thm:main}, since $p^*=\frac{Np}{N-p}>p$ in every
dimension. As the proof of Theorem~\ref{thm:ucp}(i) uses the equation only through its
non-divergence form, whose right-hand side is bounded on compact sets by a constant times
$|u|+|\nabla u|$, see \eqref{eq:route-key} in Section~\ref{sec:linear},
Theorem~\ref{thm:ucp}(i) holds for $U$ on the connected open set $\{r>r_0\}$, for every $r_0>0$.
We then proceed as follows. Fix $r_0>0$ and argue as in the proof of Theorem~\ref{thm:main}: the
extension by zero of $u$ across $\partial\R^N_+$ is an axially symmetric solution in $\R^N$,
which vanishes on the open set $\{r>r_0,\ x_N<0\}$, so that $U\equiv0$ on $\{r>r_0\}$ by
Theorem~\ref{thm:ucp}(i); since $r_0$ is arbitrary, $u\equiv0$.

Solutions can be extended by zero outside $\R^N_+$ to solutions in $\R^N$, by
Lemma~\ref{lem:normal} below, which holds in arbitrary dimension; see
\cite[Lemma 2.3, p.~472]{MercuriWillem}. Arguing as in the proofs of Theorems~\ref{thm:main}
and~\ref{thm:star}, it is easy to see that they hold for every $N\geq3$ and $1<p<2$ for axially
symmetric solutions: on the half-space, with the axis normal to its boundary, and on bounded
$C^2$ domains which are axially symmetric and star-shaped about a point of the axis, the ball in
particular, working in the latter case on the leaf $\{(r,x_N):\ r>0,\ (x',x_N)\in\Omega\}$,
which is connected. This extends the radial nonexistence of
\cite[Proposition 1.2, p.~4]{FarinaMercuriWillem} to axially symmetric solutions, for $1<p<2$,
and partially answers a question on non-radial sign-changing solutions raised in
\cite[p.~482]{MercuriWillem}.

We feel that symmetry is, to some extent, necessary for this approach to nonexistence. In fact,
restricting a solution to the planes $z=\mathrm{const}$, $x=(y,z)\in\R^2\times\R^{N-2}$, produces
no planar equation: with $v=u(\cdot,z)$ one has $|\nabla u|^2=|\nabla_yv|^2+|\nabla_zu|^2$ and
\begin{equation*}
\div_y\big(|\nabla u|^{p-2}\nabla_yv\big)=-|v|^{p^*-2}v-\div_z\big(|\nabla u|^{p-2}\nabla_zu\big),
\end{equation*}
where the weight $|\nabla u|^{p-2}$ depends on the derivatives of $u$ in the $z$ directions and,
once the divergences are expanded, the second derivatives $\partial_{zz}u$ and the mixed ones
$\partial_{yz}u$ appear, none of which is bounded by $C(|v|+|\nabla_yv|)$. This suggests that
the bound \eqref{eq:route-key} is not available for $v$, which the axial symmetry allows to
bypass: the directions orthogonal to the leaves then contribute only to the first-order term of the reduced
equation.

As a byproduct of nonexistence, one may then extend Theorem~\ref{thm:compact} to higher
dimensions provided the Palais--Smale sequences are axially symmetric. Preliminary calculations
show that concentration may occur only on the axis, so that half-space profiles can arise only
where the axis meets $\partial\Omega$; Theorem~\ref{thm:main}, in its axially symmetric form in
higher dimension and for $1<p<2$, would exclude them, and the standard bubbles in $\R^N$ would
remain the only ones responsible for the possible loss of compactness. \newline
 It is worth noting that if $\overline\Omega$ does not meet the symmetry
axis, no concentration occurs at all, and axially symmetric Palais--Smale sequences are compact.
This can be employed to obtain compactness at every level, and hence improved existence and multiplicity
results for critical problems in the spirit of Brezis and Nirenberg \cite{BrezisNirenberg}; see Clapp and Pacella~\cite[p.~587]{ClappPacella}, \cite[Theorem 2.2]{MercuriPacella} and ~\cite[Theorem 3.3]{MercuriPerera}.

\section{$W^{2,2}_{{\textrm{loc}}}$ regularity}
\label{sec:reg}

Throughout Sections~\ref{sec:reg}--\ref{sec:proofA}, $\Omega\subset\R^2$ is open, $1<p<2$,
$f:\R\to\R$ is continuous and satisfies \eqref{eq:f-growth} for every $M>0$, and a
solution of \eqref{eq:eq-f} is a weak solution $u\in C^{1,\alpha}_{{\textrm{loc}}}(\Omega)$ of
$-\Delta_pu=f(u)$ in $\Omega$, as in Theorem~\ref{thm:ucp}. We set
\begin{equation*}
Z=\{\nabla u=0\},\qquad \Sigma_0=Z\cap\{u=0\},\qquad e=\frac{\nabla u}{|\nabla u|}\ \mbox{ away from }Z ,
\end{equation*}
and, for $E$ with $\overline E$ a compact subset of $\Omega$, $M_E=\max_{\overline E}|u|$ and $C_E=C_{M_E}$, the constant of
\eqref{eq:f-growth} with $M=M_E$.

Some remarks on the available regularity theory for $p$-Laplace equations with a zeroth-order term $f(u)$ are useful in what is to follow. Bounded weak solutions of \eqref{eq:eq-f} are
$C^{1,\alpha}_{{\textrm{loc}}}$, by DiBenedetto~\cite{DiBenedetto} and
Tolksdorf~\cite{Tolksdorf}, which is the class in which
Theorem~\ref{thm:ucp} is stated; see also \cite[Theorem 1.4]{MercuriRieySciunzi}.
We will apply the weak form of the above theorem on nonexistence results, in which the
boundedness of $u$ can be justified by adapting an argument of
Trudinger~\cite{Trudinger}, see e.g.\ Peral~\cite[Appendix E]{Peral}.

We now recall how the existence of the second derivative can be justified. For $1<p<2$ second
derivatives exist in the $L^2_{{\textrm{loc}}}$ sense, $u\in W^{2,2}_{{\textrm{loc}}}(\Omega)$,
and this is all we shall use to rewrite the equation in a suitable non-divergence form. The identities below involving second derivatives hold almost everywhere;
moreover
\begin{equation}
\label{eq:hessian-on-Z}
D^2u=0\qquad\mbox{a.e.\ on }Z ,
\end{equation}
by Stampacchia's classical result, the derivatives of a Sobolev function vanishing almost
everywhere on each of its level sets (see e.g. Gilbarg and
Trudinger \cite{GilbargTrudinger}), applied to each $u_{x_i}$ on
$\{u_{x_i}=0\}$, whose intersection is $Z$.

The $W^{2,2}_{{\textrm{loc}}}$ regularity follows from
Tolksdorf~\cite{Tolksdorf}: for $p\leq2$ the regularity stated in (2.2.2) of~\cite{Tolksdorf} is
$W^{2,p}_{{\textrm{loc}}}$, but the weighted estimate~\cite[(2.2.7)]{Tolksdorf}, together
with our assumption $\nabla u\in L^\infty_{{\textrm{loc}}}$, gives $W^{2,2}_{{\textrm{loc}}}.$ In fact, the $p$-Laplacian satisfies the structure
conditions~\cite[(1.3)--(1.7), p.~127]{Tolksdorf} with $\kappa=0$, and $f(u)$ is bounded on
compact subsets of $\Omega$, so modulo translation and a scaling one is in the reference
situation of~\cite[\S2, p.~128]{Tolksdorf}. For $p<2$ his proof, pp.~130--132, is based on the
structure conditions and on the classical monotonicity inequality
\begin{equation}
\label{eq:monotone}
\big\langle|\xi|^{p-2}\xi-|\eta|^{p-2}\eta,\ \xi-\eta\big\rangle\ \geq\
c\,\big(1+|\xi|+|\eta|\big)^{p-2}|\xi-\eta|^2,\qquad\xi,\eta\in\R^2 ,
\end{equation}
with $c$ depending only on $p$; see also
Lindqvist~\cite[p.~100]{Lindqvist}. Since $p<2$, the weight on the right is bounded away
from zero wherever $\nabla u$ is bounded, and the difference quotients of $\nabla u$ are then
estimated in $L^2$ without the need of estimating $|\nabla u|$ from below.

Stronger regularity results on the stress field $U=|\nabla u|^{p-2}\nabla u$
rather than on $u$, are due to Damascelli and
Sciunzi~\cite[Corollary 2.2, p.~497]{DamascelliSciunzi} and, in a sharp form for every $p>1$, to
Cianchi and Maz'ya~\cite[(1.2), p.~569]{CianchiMazya}; we will not use them here though. We will do employ the
$C^1$ regularity assumption on $u,$ namely the continuity of the complex gradient $h=\partial_zu$, and for the
first-order Taylor expansion of $u$ in the proof of Theorem~\ref{thm:ucp}(ii).

\section{From $\Delta_pu+f(u)=0$ to a first-order system for the complex gradient}
\label{sec:linear}

In this section we write the equation \eqref{eq:eq-f} in non-divergence form, derive from it the
first-order system \eqref{eq:h-eq} for the complex gradient, and then remove from that system the
only term not involving the unknown.

Expanding the divergence away from the critical set
$Z$,
\begin{equation*}
\Delta_pu=|\nabla u|^{p-2}\big(\Delta u+(p-2)\langle D^2u\,e,e\rangle\big)
=|\nabla u|^{p-2}\,a_{ij}\partial_{ij}u ,\qquad a_{ij}=\delta_{ij}+(p-2)\,e_ie_j ,
\end{equation*}
and we set $a_{ij}=\delta_{ij}$ on $Z$: a symmetric measurable matrix, whose eigenvalues are $p-1$,
in the direction $e$, and $1$ away from $Z$, and both equal to $1$ on $Z$. Hence
\begin{equation*}
(p-1)|\xi|^2\ \leq\ a_{ij}\xi_i\xi_j\ \leq\ |\xi|^2\qquad
\mbox{for every }\xi\in\R^2 ,
\end{equation*}
almost everywhere in $\Omega$, so that the operator $a_{ij}\partial_{ij}$ is uniformly elliptic.

The equation then takes the non-divergence form
\begin{equation*}
a_{ij}\partial_{ij}u=-f(u)\,|\nabla u|^{2-p} ,
\end{equation*}
a non-homogeneous uniformly elliptic equation, whose right-hand side is bounded, using Young's
inequality, by a linear expression in the pair $(u,\nabla u)$:
\begin{equation}
\label{eq:route-key}
\big|a_{ij}\partial_{ij}u\big|\ \leq\ C_E\big(|u|+|\nabla u|\big)\qquad\mbox{a.e.\ in }E ,
\end{equation}
for every $E$ with $\overline E$ a compact subset of $\Omega$, with $C_E$ the constant $C_M$ of \eqref{eq:f-growth} for
$M=\max_{\overline E}|u|$. Both the expansion of $\Delta_pu$ and the non-divergence form are
obtained away from $Z$, where $e$ is defined, almost everywhere by the chain rule, since there $|\nabla u|^{p-2}\nabla u$ is a smooth function of $\nabla u\in W^{1,2}_{{\textrm{loc}}}$; \eqref{eq:route-key} holds
almost everywhere on $E$ because on $Z$ its left-hand side vanishes almost everywhere, as $D^2u$
does by \eqref{eq:hessian-on-Z}. Inequality \eqref{eq:route-key} is key to the approach we follow, as it will allow us to
use the classical theory available on Beltrami-type equations, which requires the coefficients of
the first-order system for the complex gradient of $u$ to be measurable and bounded.

We now write \eqref{eq:eq-f} as a first-order system for the complex gradient of $u$. With $z=x+iy$
and the Cauchy--Riemann operators $\partial_z=\frac12(\partial_x-i\partial_y)$,
$\partial_{\bar z}=\frac12(\partial_x+i\partial_y)$, we set
\begin{equation*}
h=\partial_zu=\tfrac12(u_x-iu_y),\qquad\mbox{so that}\qquad \partial_{\bar z}u=\bar h,\qquad |\nabla u|=2|h| ;
\end{equation*}
$h\in W^{1,2}_{{\textrm{loc}}}(\Omega)$ since $u\in W^{2,2}_{{\textrm{loc}}}(\Omega)$, and $h$ is
continuous since $u\in C^1$.
Away from $Z$, identifying $e$ with $\bar h/|h|$, as $\nabla u$ corresponds to $2\bar h$, we have
$\Delta u=4\partial_{\bar z}h$ and
$\langle D^2u\,e,e\rangle=2\,\mathrm{Re}\big((\partial_zh)\,\bar h/h\big)+2\partial_{\bar z}h$, so
\begin{equation*}
g=a_{ij}\partial_{ij}u=\Delta u+(p-2)\langle D^2u\,e,e\rangle
\end{equation*}
reads as
\begin{equation*}
g=2p\,\partial_{\bar z}h+(p-2)\Big((\partial_zh)\frac{\bar h}{h}+\overline{\partial_z h}\,\frac{h}{\bar h}\Big) .
\end{equation*}
This gives the first-order system for the complex gradient: almost everywhere in $\Omega$,
\begin{equation}
\label{eq:h-eq}
\partial_{\bar z}h=\mu\,\partial_zh+\nu\,\overline{\partial_z h}+\alpha\,h+\beta\,u ,
\end{equation}
with the Beltrami coefficients
\begin{equation}
\label{eq:munu}
\begin{split}
&\mu=\frac{2-p}{2p}\,\frac{\bar h}{h},\quad \nu=\frac{2-p}{2p}\,\frac{h}{\bar h}\ \ \mbox{away from }Z,
\qquad \mu=\nu=0\ \mbox{ on }Z,\\
&|\mu|+|\nu|\leq k,\qquad k=\frac{2-p}{p}<1 ,
\end{split}
\end{equation}
and
\begin{equation}
\label{eq:alphabeta}
\begin{split}
&\alpha=\frac{g\,\bar h}{2p\,(u^2+|h|^2)},\quad \beta=\frac{g\,u}{2p\,(u^2+|h|^2)}\ \ \mbox{away from }\Sigma_0,\\
&\alpha=\beta=0\ \mbox{ on }\Sigma_0 ,
\end{split}
\end{equation}
chosen so that $\alpha h+\beta u=g/2p$. Since
$|g|\leq C_E(|u|+2|h|)\leq\sqrt5\,C_E\,(u^2+|h|^2)^{1/2}$ by \eqref{eq:route-key}, it follows
that
\begin{equation}
\label{eq:ab-bound}
|\alpha|+|\beta|\leq\frac{\sqrt5\,C_E}{p}\qquad\mbox{on }E .
\end{equation}
On $Z$, where $\mu=\nu=0$, the identity \eqref{eq:h-eq}
holds as well, both sides vanishing almost everywhere: $\partial_{\bar z}h=\partial_z\partial_{\bar z}u$ and
$g=a_{ij}\partial_{ij}u$ vanish almost everywhere on $Z$ by \eqref{eq:hessian-on-Z}, and
$\alpha h+\beta u$ vanishes there as well, being $g/2p$ away from $\Sigma_0$ and $0$ on $\Sigma_0$.

Throughout the rest of this section $u$ is a solution of \eqref{eq:eq-f} and $h=\partial_zu$ its complex
gradient, which solves the first-order system \eqref{eq:h-eq}, that is
\begin{equation*}
\partial_{\bar z}h-\mu \partial_zh-\nu\overline{\partial_z h}=\alpha h+\beta u ,\qquad
|\mu|+|\nu|\leq k=\frac{2-p}p<1,\qquad \alpha,\beta\ \mbox{bounded}.
\end{equation*}
The left-hand side is a Beltrami operator and the right-hand side
is linear in $h$, except for $\beta u$, the only term not involving $h$.
For $f=0$ both terms are absent, $g$ being zero, and $|\partial_{\bar z}h|\leq k|\partial_zh|$, that is, $h$ is
quasiregular: $h=\mathcal F\circ\chi$ with $\mathcal F$ holomorphic and $\chi$ quasiconformal, and the
critical points of $u$ are the zeros of a holomorphic function, carried by $\chi$, hence isolated.
For $f\neq0$ the factorization is replaced by the representation
theorem of Bers and Nirenberg, $\mathcal F\circ\chi$ with $\mathcal F$ a holomorphic function times a
nonvanishing continuous factor, which holds for solutions of
systems with terms linear in the unknown but not with a term such as $\beta u$; we use it in the
form of Lemma~\ref{lem:representation} below.
We therefore devote this section to reducing \eqref{eq:h-eq} to a system without it.

\subsubsection*{Removing the term $\beta u$}

Let us set
\begin{equation*}
\mathcal R=h-vu .
\end{equation*}
A direct computation, carried out in the proof of Lemma~\ref{lem:remove} below, shows that in the
equation satisfied by $\mathcal R$ the coefficient of $u$ vanishes when
\begin{equation*}
\partial_{\bar z}v-\mu \partial_zv-\nu\overline{\partial_z v}=\beta+\alpha v-|v|^2+\mu v^2+\nu\bar v^2 .
\end{equation*}
Its left-hand side is $\mathcal Lv$, where $\mathcal L$ is the
Beltrami operator of \eqref{eq:h-eq}; its right-hand side is quadratic in $v$, through the terms $-|v|^2$, $\mu v^2$ and $\nu\bar v^2$.
With such a $v$ the equation for $\mathcal R$ has no term in $u$ left, and
Lemma~\ref{lem:representation} applies to it.

This change of unknown, and its construction, are strongly inspired by the work of
Alessandrini~\cite{Alessandrini2012}. To remove zeroth-order terms in the plane, positive
multipliers are used in~\cite{Alessandrini2012}, built on small disks by a contraction, a classical
method going back, to our recollection,
to Bers, John and Schechter~\cite[p.~260]{BersJohnSchechter}; see the introduction
of~\cite{Alessandrini2012}. Here the multiplier is
replaced by its logarithmic derivative: if $\phi>0$ solved \eqref{eq:h-eq} in place of $u$, that is
$\mathcal L(\partial_z\phi)=\alpha\,\partial_z\phi+\beta\phi$, then
\begin{equation*}
v=\frac{\partial_z\phi}{\phi}\ \mbox{ would solve }\ \mathcal Lv=\beta+\alpha v-|v|^2+\mu v^2+\nu\bar v^2,
\qquad \mathcal R=\phi\,\partial_z\Big(\frac u\phi\Big) ,
\end{equation*}
mirroring the elementary one variable case, where the logarithmic derivative of a positive
solution of a linear second-order equation solves a Riccati equation. Solving the equation for $v$
directly (see Lemma~\ref{lem:remove} below) is a way to bypass the use of $\phi$: no positive
solution has to be constructed, and $v$ need not be a logarithmic derivative.

Such a $v$, with $|v|\leq1$, exists around every point of $\Omega$, on disks of sufficiently small
radius. In fact it is constructed by writing the equation for $v$,
\begin{equation*}
\mathcal Lv=\mathcal Q(v),\qquad \mathcal Q(v)=\beta+\alpha v-|v|^2+\mu v^2+\nu\bar v^2 ,
\end{equation*}
as the fixed point problem $v=\mathcal L^{-1}\mathcal Q(v)$, in which the whole nonlinearity sits in
$\mathcal Q$, quadratic in $v$, and $\mathcal L^{-1}$ is a right inverse of $\mathcal L$ on a disk. Lemma~\ref{lem:T} below solves $\mathcal Lv=\psi$ on a disk $B_R$, for any measurable $\mu,\nu$ with
$|\mu|+|\nu|\leq k<1$, by a solution $v=\mathcal L^{-1}\psi$ depending (real-)linearly on $\psi$ and
satisfying $\|v\|_\infty\leq C_0R\|\psi\|_\infty$, with $C_0$ depending only on $k$: as an operator
on $L^\infty(B_R)$, $\mathcal L^{-1}$ has norm at most $C_0R$. On the unit ball of
$L^\infty(B_R)$ the map $\mathcal Q$ is bounded and Lipschitz, with constants depending only
on the bounds for $\alpha,\beta$ and on $k$; the factor $R$ therefore makes
$v\mapsto \mathcal L^{-1}\mathcal Q(v)$ a contraction of that ball for $R$ small enough, and its fixed point
produces the desired $v$.

The Beltrami operator is a perturbation of $\partial_{\bar z}$: $\mathcal L=\partial_{\bar z}-\mu\partial_z-\nu\overline{\partial_z}$
with $|\mu|+|\nu|\leq k<1$. The Cauchy transform $\mathcal C$ inverts $\partial_{\bar z}$, being the
convolution with its fundamental solution $\frac1{\pi z}$, and the Beurling transform
$\mathcal B=\partial_z\mathcal C$ expresses $\partial_z(\mathcal C\omega)$ in terms of
$\partial_{\bar z}(\mathcal C\omega)=\omega$. Looking for $v=\mathcal C\omega$, the unknown becomes
$\omega=\partial_{\bar z}v$, and the equation $\mathcal Lv=\psi$ becomes the integral equation
$\omega-\mu\mathcal B\omega-\nu\overline{\mathcal B\omega}=\psi$.

We now start building $\mathcal L^{-1}$, whose construction is based on the
following properties of the Cauchy and Beurling transforms.

\begin{prop}[Cauchy and Beurling transforms]
\label{prop:transforms}
Let $\mathcal C$ be the Cauchy transform and $\mathcal B$ the Beurling transform,
\begin{equation*}
\mathcal C\omega(z)=-\frac1\pi\int_{\mathbb C}\frac{\omega(\zeta)}{\zeta-z}\,dA(\zeta),
\qquad
\mathcal B\omega(z)=-\frac1\pi\,\lim_{\delta\to0}\int_{|\zeta-z|>\delta}\frac{\omega(\zeta)}{(\zeta-z)^2}\,dA(\zeta),
\end{equation*}
where $dA$ is the Lebesgue measure of $\mathbb C=\R^2$; the first integral is taken for $\omega$
with compact support, and the limit in the second in $L^q(\mathbb C)$ for
$\omega\in L^q(\mathbb C)$, $1<q<\infty$.
\begin{enumerate}
\item[(i)] $\mathcal B$ is bounded on $L^q(\mathbb C)$ for $1<q<\infty$.
\item[(ii)] Its operator norm $\|\mathcal B\|_q$ on $L^q(\mathbb C)$ satisfies $\lim_{q\to2}\|\mathcal B\|_q=1$.
\item[(iii)] For $\omega\in L^q(\mathbb C)$ with compact support and $q>2$: $\mathcal C\omega$ is continuous,
$\mathcal C\omega\in W^{1,q}_{{\textrm{loc}}}(\mathbb C)$, and $\partial_{\bar z}(\mathcal C\omega)=\omega$, $\partial_z(\mathcal C\omega)=\mathcal B\omega$.
\end{enumerate}
\end{prop}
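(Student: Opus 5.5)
The three properties are classical facts of planar harmonic analysis. I will derive them from the Fourier representation of $\mathcal B$ and from standard Calder\'on--Zygmund theory, citing Ahlfors' and Astala--Iwaniec--Martin's treatments for details rather than redoing them.

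\emph{Step (i): $L^q$ boundedness.} $\mathcal B$ is a convolution singular integral with kernel $-\frac1{\pi\zeta^2}$. This kernel is homogeneous of degree $-2$, smooth away from the origin, and has mean zero on circles, since $\int_0^{2\pi}e^{-2i\theta}\,d\theta=0$. The Calder\'on--Zygmund theorem therefore gives boundedness on $L^q(\mathbb C)$ for $1<q<\infty$, together with existence of the principal value limit in $L^q$.

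\emph{Step (ii): $\lim_{q\to2}\|\mathcal B\|_q=1$.} I would first compute the Fourier multiplier of $\mathcal B$, namely $\bar\xi/\xi$ up to the sign conventions. This comes from $\partial_z=\mathcal B\partial_{\bar z}$ on test functions, whose symbols are $\frac i2\bar\xi$ and $\frac i2\xi$. Since $|\bar\xi/\xi|=1$, Plancherel shows that $\mathcal B$ is an isometry of $L^2$, so $\|\mathcal B\|_2=1$. Next I would apply Riesz--Thorin between $L^2$ and $L^{q_1}$ for a fixed $q_1$ (say $q_1=4$, and $q_1=4/3$ on the other side). This gives $\|\mathcal B\|_q\le\|\mathcal B\|_{q_1}^{\theta}$, with $\theta\to0$ as $q\to2$, hence $\limsup_{q\to2}\|\mathcal B\|_q\le1$.

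The lower bound $\|\mathcal B\|_q\ge1$ for every $q$ needs a separate argument. One option is duality, $\|\mathcal B\|_q=\|\mathcal B\|_{q'}$ (the adjoint of $\mathcal B$ is its conjugate-type transform, which has the same norm), combined with Riesz--Thorin, which gives $1=\|\mathcal B\|_2\le\|\mathcal B\|_q^{1/2}\|\mathcal B\|_{q'}^{1/2}=\|\mathcal B\|_q$. Alternatively one can test on Gaussians. Only the upper bound is actually needed later for the contraction argument, so this step is not essential.

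\emph{Step (iii): properties of $\mathcal C\omega$.} Take $\omega\in L^q$ with compact support and $q>2$.
\begin{itemize}
\item Continuity: the kernel $1/(\zeta-z)$ lies in $L^{q'}_{\mathrm{loc}}$ because $q'<2$. H\"older's inequality and continuity of translation in $L^{q'}$ then give continuity, indeed H\"older continuity of order $1-2/q$.
\item Distributional derivatives: for $\varphi\in C_c^\infty$, Fubini and the fact that $\frac1{\pi z}$ is the fundamental solution of $\partial_{\bar z}$ give $\partial_{\bar z}\mathcal C\omega=\omega$.
\item For $\partial_z$: first check $\partial_z\mathcal C\omega=\mathcal B\omega$ when $\omega\in C_c^\infty$, by differentiating under the integral after integrating by parts, or by comparing Fourier symbols. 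Then pass to general $\omega$ by density: approximate in $L^q$ with supports in a fixed disk. Step (i) gives $\mathcal B\omega_j\to\mathcal B\omega$ in $L^q$, and $\mathcal C\omega_j\to\mathcal C\omega$ locally uniformly by the H\"older estimate above. Hence $\mathcal C\omega\in W^{1,q}_{\mathrm{loc}}$ with the stated derivatives.
\end{itemize}

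The main technical points are the principal value convergence in (i) and the exact identification of the Fourier multiplier in (ii). Both are standard, so I would cite them rather than reprove them.
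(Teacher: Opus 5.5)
Your proposal is correct and follows essentially the same route as the paper, which does not prove the proposition but cites the Calder\'on--Zygmund inequality for (i), the $L^2$ isometry of $\mathcal B$ combined with Riesz--Thorin interpolation for (ii), and Ahlfors' Lemmas 1 and 3 for (iii). Your duality argument for $\|\mathcal B\|_q\geq1$ correctly fills in a lower bound the paper leaves implicit, and, as you note, only the upper bound $k\|\mathcal B\|_q<1$ is used in Lemma~\ref{lem:T}.
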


The use of these transforms in $L^q$, $q>2$, for Beltrami equations with measurable coefficients is
due to Bojarski~\cite{Bojarski}, and is based on (ii)~\cite[(1.19)]{Bojarski}: for a
coefficient bounded by $k<1$ it gives $q>2$ with $k\|\mathcal B\|_q<1$, which is the only bound on
$\mathcal B$ that we use in Lemma~\ref{lem:T}. See also
Ahlfors~\cite[pp.~51--66]{Ahlfors}.\footnote{In Ahlfors~\cite{Ahlfors}, $\mathcal B$ is the operator $T$ and
$\mathcal C$ differs from the operator $P$ by an additive constant, the normalization at the origin;
(i) and (ii) are the Calder\'on--Zygmund inequality $\|Th\|_q\leq C_q\|h\|_q$, with $C_q\to1$ as
$q\to2$, the limit coming from the Riesz--Thorin convexity theorem; and (iii) is Lemmas 1 and 3
of~\cite{Ahlfors}.}

We are now in a position to invert the Beltrami operator $\mathcal L$ on a disk; as anticipated,
we denote this right inverse by $\mathcal L^{-1}$.

\begin{lemma}[Solvability of the Beltrami equation on a disk]
\label{lem:T}
Let $0\leq k<1$. There are $q>2$ and $C_0>0$, depending only on $k$, with the
following property. Let $B_R\subset\mathbb C$ be a disk, let $\mu,\nu$ be measurable on $B_R$ with
$|\mu|+|\nu|\leq k$, and let
\begin{equation*}
\mathcal Lv=\partial_{\bar z}v-\mu \partial_zv-\nu\overline{\partial_z v}
\end{equation*}
be the corresponding Beltrami operator. Then, for every $\psi\in L^\infty(B_R)$, the equation
$\mathcal Lv=\psi$ has a solution $v\in W^{1,q}(B_R)\cap C(\overline{B_R})$, which depends
real-linearly on $\psi$ and satisfies
\begin{equation}
\label{eq:T}
\|v\|_{L^\infty(B_R)}\leq C_0R\,\|\psi\|_{L^\infty(B_R)} .
\end{equation}
We write $v=\mathcal L^{-1}\psi$: a right inverse of $\mathcal L$, of norm at most $C_0R$ on
$L^\infty(B_R)$, only real-linear because of the term $\nu\overline{\partial_z v}$.
\end{lemma}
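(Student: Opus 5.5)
We look for the solution in the form $v=\mathcal C\omega$, with $\omega\in L^q(\mathbb C)$ supported in $\overline{B_R}$. First extend $\mu,\nu,\psi$ by zero outside $B_R$; the bound $|\mu|+|\nu|\leq k$ still holds on $\mathbb C$. By Proposition~\ref{prop:transforms}(ii) we can fix $q>2$, depending only on $k$, with $k\|\mathcal B\|_q<1$. Then consider the real-linear operator
\begin{equation*}
\mathcal K\omega=\mu\,\mathcal B\omega+\nu\,\overline{\mathcal B\omega}
\end{equation*}
on $L^q(\mathbb C)$. Pointwise $|\mathcal K\omega|\leq(|\mu|+|\nu|)|\mathcal B\omega|\leq k|\mathcal B\omega|$, so $\|\mathcal K\|_q\leq k\|\mathcal B\|_q<1$. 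The Neumann series therefore shows that
\begin{equation*}
\omega-\mathcal K\omega=\psi\,\mathbf 1_{B_R}
\end{equation*}
has a unique solution $\omega=\sum_{j\geq0}\mathcal K^j(\psi\mathbf 1_{B_R})$. It depends real-linearly on $\psi$ and satisfies
\begin{equation*}
\|\omega\|_q\leq (1-k\|\mathcal B\|_q)^{-1}\|\psi\|_{L^q(B_R)}\leq C_1 R^{2/q}\|\psi\|_\infty .
\end{equation*}
Since $\mu=\nu=0$ outside $B_R$, the equation forces $\omega=0$ there, so $\omega$ has compact support.

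Set $v=\mathcal C\omega$. By Proposition~\ref{prop:transforms}(iii), $v$ is continuous, $v\in W^{1,q}_{\rm loc}$, and $\partial_{\bar z}v=\omega$, $\partial_zv=\mathcal B\omega$. Hence on $B_R$
\begin{equation*}
\mathcal Lv=\omega-\mu\mathcal B\omega-\nu\overline{\mathcal B\omega}=\psi .
\end{equation*}
Restricting to $B_R$ gives $v\in W^{1,q}(B_R)\cap C(\overline{B_R})$. Real-linearity in $\psi$ is inherited from $\omega$, since $\mathcal C$ is linear.

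It remains to prove the estimate \eqref{eq:T}. Use Hölder's inequality on the Cauchy kernel. For $z\in B_R$, with $\omega$ supported in $B_R$,
\begin{equation*}
|v(z)|\leq\frac1\pi\|\omega\|_q\Big(\int_{B_R}|\zeta-z|^{-q'}dA\Big)^{1/q'} ,
\end{equation*}
where $q'<2$ because $q>2$. The last integral is at most the integral over the disk $B_{2R}(z)$, which equals $c_q R^{2-q'}$. Its $1/q'$ power is therefore $c\,R^{2/q'-1}=c\,R^{1-2/q}$. Combining with the bound on $\|\omega\|_q$ gives
\begin{equation*}
|v(z)|\leq C\,R^{1-2/q}R^{2/q}\|\psi\|_\infty=C_0R\|\psi\|_\infty ,
\end{equation*}
with $C_0$ depending only on $q$, hence only on $k$. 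This matching of exponents, which produces exactly the factor $R$, is the step to get right. The main conceptual point is the choice of $q$ from Proposition~\ref{prop:transforms}(ii), which guarantees that $\mathcal K$ is a contraction on $L^q$. Translation invariance of $\mathcal C$ and $\mathcal B$ means the centre of the disk plays no role.
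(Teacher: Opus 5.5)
Your proposal is correct and follows the paper's proof essentially step by step. The shared steps are: the choice of $q>2$ with $k\|\mathcal B\|_q<1$; the Neumann series for $\omega-\mu\mathcal B\omega-\nu\overline{\mathcal B\omega}=\psi$, whose solution is supported in $\overline{B_R}$; setting $v=\mathcal C\omega$; and the H\"older estimate of the Cauchy kernel over $B_{2R}(z)$, whose factor $R^{1-2/q}$ combines with $\|\omega\|_q\lesssim R^{2/q}\|\psi\|_\infty$ to give exactly $C_0R$.
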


\begin{proof}
By Proposition~\ref{prop:transforms}(ii) we fix $q>2$ so close to $2$ that
$k\|\mathcal B\|_q<1$; $q$ and the constants below depend on $k$ only. Extend $\mu$, $\nu$ and $\psi$ by
zero outside $B_R$ and look for $\mathcal L^{-1}\psi$ in the form $\mathcal C\omega$ with $\omega\in L^q(\mathbb C)$. Since
$\partial_{\bar z}(\mathcal C\omega)=\omega$ and $\partial_z(\mathcal C\omega)=\mathcal B\omega$ (Proposition~\ref{prop:transforms}(iii)),
the equation for $\mathcal C\omega$ becomes an integral equation for $\omega$:
\begin{equation}
\label{eq:neumann}
\omega-\mu \mathcal B\omega-\nu\overline{\mathcal B\omega}=\psi .
\end{equation}
Since $|\mu \mathcal B\omega+\nu\overline{\mathcal B\omega}|\leq k|\mathcal B\omega|$, the operator
$\omega\mapsto\mu \mathcal B\omega+\nu\overline{\mathcal B\omega}$ has norm at most $k\|\mathcal B\|_q<1$ on
$L^q(\mathbb C)$, and \eqref{eq:neumann} has exactly one solution $\omega\in L^q(\mathbb C)$,
given by a Neumann series (see e.g.\ Reed and Simon~\cite{ReedSimon}), with
\begin{equation}
\label{eq:omega-bound}
\|\omega\|_q\leq(1-k\|\mathcal B\|_q)^{-1}\|\psi\|_q .
\end{equation}
Outside $B_R$,
where $\mu$, $\nu$ and $\psi$ vanish, the equation reads $\omega=0$: the solution is supported in
$\overline{B_R}$, and $\mathcal C\omega$ is continuous and in $W^{1,q}(B_R)$ by
Proposition~\ref{prop:transforms}(iii). We set $v=\mathcal L^{-1}\psi=\mathcal C\omega$ on $B_R$; it solves $\mathcal Lv=\psi$, and depends
real-linearly on $\psi$ because $\omega$ does.

To justify \eqref{eq:T}, we first estimate $\psi$ in \eqref{eq:omega-bound}:
since $\psi$ vanishes outside $B_R$,
\begin{equation}
\label{eq:omega-infty}
\|\omega\|_q\leq(1-k\|\mathcal B\|_q)^{-1}\|\psi\|_q\leq(1-k\|\mathcal B\|_q)^{-1}(\pi R^2)^{1/q}\|\psi\|_\infty .
\end{equation}
On the other hand, by H\"older's inequality with $q'=\frac q{q-1}<2$, for $z\in B_R$, since $\omega$
vanishes outside $B_R$ and $B_R\subset B_{2R}(z)$,
\begin{equation}
\label{eq:P-bound}
\begin{split}
|\mathcal C\omega(z)|&\leq\frac1\pi\|\omega\|_q\Big(\int_{B_{2R}(z)}|\zeta-z|^{-q'}dA\Big)^{1/q'}
=c(q)\,R^{1-\frac2q}\,\|\omega\|_q ,\\
c(q)&=\frac{2^{1-\frac2q}}\pi\Big(\frac{2\pi}{2-q'}\Big)^{1/q'}.
\end{split}
\end{equation}
Combining \eqref{eq:omega-infty} and \eqref{eq:P-bound} gives \eqref{eq:T}, with
$C_0=\pi^{1/q}c(q)(1-k\|\mathcal B\|_q)^{-1}$, which depends only on $k$, as $q$ does.
\end{proof}

\begin{lemma}[Cancelling $\beta u$ in \eqref{eq:h-eq} via a fixed point of $\mathcal L^{-1}\mathcal Q$]
\label{lem:remove}
Let $u$ be a solution of \eqref{eq:eq-f}, let $h=\partial_zu$, let $\mu,\nu,\alpha,\beta$ be as in
\eqref{eq:munu}--\eqref{eq:alphabeta}, and let $q$ be the exponent of Lemma~\ref{lem:T} for
$k=\frac{2-p}p$. For every $z_0\in\Omega$ there exists $R_0>0$ such that, for
every $0<R\leq R_0$, writing $B_R=B_R(z_0)$, there exists a function $v\in W^{1,q}(B_R)\cap C(\overline{B_R})$, with
$|v|\leq1$, such that $\mathcal R=h-vu$ solves
\begin{equation}
\label{eq:V-hom}
\partial_{\bar z}\mathcal R=\mu \partial_z\mathcal R+\nu\overline{\partial_z \mathcal R}+A\mathcal R+B\overline{\mathcal R}\ \ \mbox{a.e.\ in }B_R,
\qquad A=\alpha+\mu v,\quad B=\nu\bar v-v ,
\end{equation}
with $A$ and $B$ bounded on $B_R$. Moreover
\begin{equation}
\label{eq:V-reg}
\mathcal R\in W^{1,2}(B_R)\cap L^\infty(B_R) ,
\end{equation}
and if $u(z_0)=0$ and $u\not\equiv0$ on every disk $B_r(z_0)$, then $\mathcal R\not\equiv0$ in $B_r(z_0)$
for every $0<r\leq R$.
\end{lemma}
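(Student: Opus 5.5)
The plan has three parts: construct $v$ by a contraction, verify the equation for $\mathcal R$ by direct computation, and prove nondegeneracy by a Gronwall-type argument.

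\textbf{Construction of $v$.} Fix $z_0$ and a compact disk $E=\overline{B_{R_1}(z_0)}\subset\Omega$. By \eqref{eq:ab-bound}, $|\alpha|+|\beta|\leq K:=\sqrt5C_E/p$ on $E$. Let $X$ be the closed unit ball of $L^\infty(B_R)$ and set $\Phi(v)=\mathcal L^{-1}\mathcal Q(v)$, with $\mathcal L^{-1}$ from Lemma~\ref{lem:T}.

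For $\|v\|_\infty\leq1$ we have $|\mathcal Q(v)|\leq K+K+1+k\leq 2K+2$. Moreover $|\mathcal Q(v)-\mathcal Q(w)|\leq(K+2+2k)|v-w|$, since $|v|^2-|w|^2$ and $v^2-w^2$ are bounded by $2|v-w|$. Since $\mathcal L^{-1}$ is real-linear with norm at most $C_0R$, choosing $R_0\leq R_1$ with $C_0R_0(2K+2)\leq1$ and $C_0R_0(K+4)<1$ makes $\Phi$ a contraction of $X$ into itself for every $R\leq R_0$. The fixed point $v=\mathcal L^{-1}\mathcal Q(v)$ lies in $W^{1,q}(B_R)\cap C(\overline{B_R})$ by Lemma~\ref{lem:T}, and $|v|\leq1$.

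\textbf{Equation for $\mathcal R$.} Since $u\in C^1$ and $v\in W^{1,q}\cap C$, the product rule gives $vu\in W^{1,q}$, hence $\mathcal R\in W^{1,2}(B_R)\cap L^\infty$ because $h\in W^{1,2}\cap C$. This is \eqref{eq:V-reg}. Using $\partial_zu=h$ and $\partial_{\bar z}u=\bar h$, we compute
\begin{equation*}
\mathcal L\mathcal R=\mathcal Lh-u\,\mathcal Lv-v\bar h+\mu vh+\nu\bar v\bar h,
\end{equation*}
where the conjugate term uses $\overline{\partial_z(vu)}=\bar v\bar h+u\overline{\partial_zv}$, valid since $u$ is real. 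Substituting $\mathcal Lh=\alpha h+\beta u$ and $\mathcal Lv=\mathcal Q(v)$, and then writing $h=\mathcal R+vu$, $\bar h=\overline{\mathcal R}+\bar vu$, all the $u$-terms cancel by the choice of $\mathcal Q$. What remains is $(\alpha+\mu v)\mathcal R+(\nu\bar v-v)\overline{\mathcal R}$, which is \eqref{eq:V-hom}. The coefficients $A$ and $B$ are bounded because $|v|\leq1$. This verification is routine bookkeeping, and I would check it term by term.

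\textbf{Nondegeneracy.} This is the main obstacle. Suppose $\mathcal R\equiv0$ on some $B_r(z_0)$. Then $h=vu$, so $|\nabla u|=2|h|\leq2|u|$ there. I would propagate vanishing from $z_0$ along rays: for $z\in B_r(z_0)$, set $\phi(t)=u(z_0+t(z-z_0))$, which is $C^1$. Then $|\phi'(t)|\leq 2r|\phi(t)|$ and $\phi(0)=0$, so Gronwall gives $\phi\equiv0$. Hence $u\equiv0$ on $B_r(z_0)$, contradicting the hypothesis. The delicate point is that the ODE argument needs only pointwise control of $\nabla u$, which holds because both $h$ and $v$ are continuous.
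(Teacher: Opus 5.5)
Your proposal is correct and follows the paper's proof essentially step for step. You use the same contraction $v\mapsto\mathcal L^{-1}\mathcal Q(v)$ on the unit ball of $L^\infty(B_R)$, only making the bound and Lipschitz constants of $\mathcal Q$ explicit; the same direct computation cancelling the $u$-terms; and the same Gronwall argument along rays from $z_0$ based on $|\nabla u|\le 2|u|$, with only cosmetic differences in the order of the steps and the parametrization of the rays.
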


\begin{proof}
Fix $z_0\in\Omega$ and a disk $B_R=B_R(z_0)$ with $\overline{B_R}\subset\Omega$, to be shrunk in
Step 2, and let $\Lambda$ be a bound for $|\alpha|+|\beta|$ on $B_R$, from \eqref{eq:ab-bound}.

\emph{Step 1: the term $\beta u$ disappears if $v$ solves a quadratic equation.} Let $v\in W^{1,q}(B_R)$ be bounded. Differentiating
$\mathcal R=h-vu$ and using $\partial_zu=h$, $\partial_{\bar z}u=\bar h$, we obtain
\begin{equation*}
\partial_{\bar z}\mathcal R-\mu \partial_z\mathcal R-\nu\overline{\partial_z \mathcal R}
=\big(\partial_{\bar z}h-\mu \partial_zh-\nu\overline{\partial_z h}\big)
-\big(\partial_{\bar z}v-\mu \partial_zv-\nu\overline{\partial_z v}\big)u
-v\bar h+\mu v h+\nu\bar v\bar h .
\end{equation*}
The first bracket is $\alpha h+\beta u$ by \eqref{eq:h-eq}; writing $h=\mathcal R+vu$ and
$\bar h=\overline{\mathcal R}+\bar vu$ in the remaining terms and collecting, we obtain almost everywhere in $B_R$
\begin{equation}
\label{eq:riccati}
\begin{split}
\partial_{\bar z}\mathcal R-\mu \partial_z\mathcal R-\nu\overline{\partial_z \mathcal R}
&=(\alpha+\mu v)\,\mathcal R+(\nu\bar v-v)\,\overline{\mathcal R}\\
&\quad+\Big[\beta+\alpha v-|v|^2+\mu v^2+\nu\bar v^2
-\big(\partial_{\bar z}v-\mu \partial_zv-\nu\overline{\partial_z v}\big)\Big]\,u .
\end{split}
\end{equation}
Hence $\mathcal R$ solves the homogeneous system \eqref{eq:V-hom} as soon as $v$ solves
\begin{equation}
\label{eq:riccati-eq}
\partial_{\bar z}v-\mu \partial_zv-\nu\overline{\partial_z v}=\mathcal Q(v)\ \ \mbox{a.e.\ in }B_R,
\qquad
\mathcal Q(v)=\beta+\alpha v-|v|^2+\mu v^2+\nu\bar v^2 ,
\end{equation}
and for $|v|\leq1$ the coefficients $A$ and $B$ are bounded on $B_R$, since $|\alpha|\leq\Lambda$
and $|\mu|+|\nu|<1$.

\emph{Step 2: Solution to $\mathcal Lv=\mathcal Q(v)$ by a contraction argument on a small disk.} Let $\mathcal L^{-1}$ and $C_0$ be as in
Lemma~\ref{lem:T} on $B_R$, for $k=\frac{2-p}p$ and the coefficients $\mu,\nu$ of
\eqref{eq:munu}. A fixed point of $v\mapsto\mathcal L^{-1}\mathcal Q(v)$ solves \eqref{eq:riccati-eq},
and since by Lemma~\ref{lem:T} the norm of $\mathcal L^{-1}$ is at most $C_0R$, this map is a
contraction for $R$ small.
Indeed, on the unit ball $X$ of $L^\infty(B_R)$ the map $\mathcal Q$ is bounded and Lipschitz, with
constants depending only on $\Lambda$, since $|\alpha|+|\beta|\leq\Lambda$ on $B_R$ and
$|\mu|+|\nu|<1$; so, by \eqref{eq:T}, $v\mapsto\mathcal L^{-1}\mathcal Q(v)$ sends $X$ into itself and
contracts distances once $R$ is small enough. The fixed point produced by the classical Banach-Caccioppoli lemma,
$v=\mathcal L^{-1}\mathcal Q(v)$, lies in $W^{1,q}(B_R)\cap C(\overline{B_R})$, satisfies $|v|\leq1$
and solves \eqref{eq:riccati-eq}. The regularity \eqref{eq:V-reg} then holds by construction:
$h\in W^{1,2}(B_R)$ is continuous and $vu\in W^{1,q}(B_R)\subset W^{1,2}(B_R)$ is bounded.

\emph{Step 3: $\mathcal R$ vanishes on a disk only if $u$ does.} If $\mathcal R\equiv0$ on $B_r(z_0)$, then $\partial_zu=vu$ there, that is,
$\nabla u=2\big(\mathrm{Re}\,v,-\mathrm{Im}\,v\big)u$ and $|\nabla u|\leq2|u|$. Fix a unit vector
$\theta$ and let $\varphi(t)=u(z_0+t\theta)$ for $0\leq t<r$. Then, since $u(z_0)=0$, it holds
that $\varphi(0)=0$, and
\begin{equation*}
|\varphi'(t)|=\big|\nabla u(z_0+t\theta)\cdot\theta\big|\leq\big|\nabla u(z_0+t\theta)\big|
\leq2\,|\varphi(t)| ,
\end{equation*}
so $\varphi\equiv0$ by the fundamental theorem of calculus (as for Gronwall's general inequality). Every point of $B_r(z_0)$ lies on one of these
segments, whence $u\equiv0$ on $B_r(z_0)$.
\end{proof}

\section{The perturbed complex gradient of $u$ has isolated zeros}
\label{sec:representation}

By Lemma~\ref{lem:remove}, $\mathcal R=h-vu$ solves the homogeneous system \eqref{eq:V-hom} on a
disk $B_R=B_R(z_0)$, with $|\mu|+|\nu|\leq k<1$ and $A,B$ bounded, and lies in
$W^{1,2}(B_R)\cap L^\infty(B_R)$. It is therefore a generalized solution of
\eqref{eq:V-hom} in the sense of Bojarski~\cite{Bojarski}, and the representation theorem
of Bers and Nirenberg~\cite{BersNirenberg}, in Bojarski's form~\cite[Theorem 4.4,
p.~477]{Bojarski}, applies to it: $\mathcal R=e^{\sigma}\,(\mathcal G\circ\chi)$, with $\sigma$
continuous, $\mathcal G$ holomorphic and $\chi$ a homeomorphism. The zeros of $\mathcal R$ are then
those of a holomorphic function, carried by $\chi$, hence isolated unless $\mathcal R$ vanishes
identically; see Bojarski~\cite[p.~478]{Bojarski} and Bers, John and
Schechter~\cite[\S\S6.3--6.4, pp.~259--261]{BersJohnSchechter}. The next
lemma states this in the form used in the following two sections, with the factor $e^{\sigma}$
absorbed into $\mathcal F=e^{\sigma\circ\chi^{-1}}\mathcal G$.

A homeomorphism $\Psi$ between domains of $\mathbb C$ is \emph{$K$-quasiconformal}, $K\geq1$, when
it is orientation-preserving, lies in $W^{1,2}_{{\textrm{loc}}}$ and satisfies almost everywhere
\begin{equation*}
|\partial_z\Psi|+|\partial_{\bar z}\Psi|\ \leq\ K\big(|\partial_z\Psi|-|\partial_{\bar z}\Psi|\big) ,
\qquad\mbox{equivalently,}\qquad
|\partial_{\bar z}\Psi|\leq\tfrac{K-1}{K+1}|\partial_z\Psi| .
\end{equation*}
Note that, where $\Psi$ is differentiable, its derivative along the unit direction $e^{i\theta}$ is
$e^{i\theta}\partial_z\Psi+e^{-i\theta}\partial_{\bar z}\Psi$, and
\begin{equation*}
|\partial_z\Psi|+|\partial_{\bar z}\Psi|\ \geq\
\big|e^{i\theta}\,\partial_z\Psi+e^{-i\theta}\,\partial_{\bar z}\Psi\big|\ \geq\
|\partial_z\Psi|-|\partial_{\bar z}\Psi| ,
\end{equation*}
both bounds being attained as $\theta$ varies; hence the condition says that $\Psi$ stretches at
most $K$ times more in one direction than in another. See Lehto and Virtanen~\cite[p.~168]{LehtoVirtanen} and also
Astala, Iwaniec and Martin~\cite[p.~24]{AstalaIwaniecMartin}.

\begin{lemma}[Representation of $\mathcal R$ and isolatedness of its zeros]
\label{lem:representation}
Let $u$, $v$, $\mathcal R=h-vu$ and $B_R=B_R(z_0)$ be as in Lemma~\ref{lem:remove}, and let
$k=\frac{2-p}p$ and $K=\frac{1+k}{1-k}$. Then there are a $K$-quasiconformal homeomorphism
$\chi$ of $\mathbb C$ onto $\mathbb C$ and a continuous function $\mathcal F$ on $\chi(B_R)$ such
that
\begin{equation}
\label{eq:representation}
\mathcal R=\mathcal F\circ\chi\qquad\mbox{in }B_R .
\end{equation}
If moreover $u(z_0)=0$ and $u\not\equiv0$ on every disk $B_r(z_0)$, then near each zero $\zeta_1$ of
$\mathcal F$ it holds that $\mathcal F(\zeta)=(\zeta-\zeta_1)^n\Theta(\zeta)$, for some integer
$n\geq1$ and some continuous $\Theta$ which does not vanish; in particular, the zeros of
$\mathcal F$ are isolated, as well as those of $\mathcal R$ in $B_R$.
\end{lemma}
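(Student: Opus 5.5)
The plan is to reduce the system \eqref{eq:V-hom} for $\mathcal R$ to a pure Beltrami equation, solve that equation globally by a quasiconformal homeomorphism $\chi$, and then observe that $\mathcal F=\mathcal R\circ\chi^{-1}$ satisfies a Carleman--Bers--Vekua equation $\partial_{\bar\zeta}\mathcal F=a\mathcal F$ with bounded $a$. The similarity principle then gives $\mathcal F=e^{s}\mathcal G$ with $\mathcal G$ holomorphic.

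\emph{First step: absorb the zeroth-order terms into the Beltrami coefficient.} On the set where $\mathcal R\neq0$, write $A\mathcal R+B\overline{\mathcal R}=a_0\mathcal R$ with $a_0=A+B\overline{\mathcal R}/\mathcal R$, and set $a_0=0$ where $\mathcal R=0$. Since $\mathcal R\in W^{1,2}$, the derivatives of $\mathcal R$ vanish a.e.\ on $\{\mathcal R=0\}$ by Stampacchia, as recalled before \eqref{eq:hessian-on-Z}. Hence a.e.\ in $B_R$
\[
\partial_{\bar z}\mathcal R=\mu\partial_z\mathcal R+\nu\overline{\partial_z\mathcal R}+a_0\mathcal R ,\qquad |a_0|\leq|A|+|B| .
\]
Next define $\tilde\mu=\mu+\nu\,\overline{\partial_z\mathcal R}/\partial_z\mathcal R$ where $\partial_z\mathcal R\neq0$, and $\tilde\mu=0$ elsewhere, extended by $0$ outside $B_R$. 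Then $|\tilde\mu|\leq k$ and $\partial_{\bar z}\mathcal R-\tilde\mu\,\partial_z\mathcal R=a_0\mathcal R$, which is a complex-linear equation.

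\emph{Second step: construct $\chi$.} By the measurable Riemann mapping theorem, which rests on the same Bojarski $L^q$ theory as Lemma~\ref{lem:T} via Proposition~\ref{prop:transforms}, there is a homeomorphism $\chi$ of $\mathbb C$ onto itself, in $W^{1,q}_{\mathrm{loc}}$, solving $\partial_{\bar z}\chi=\tilde\mu\,\partial_z\chi$. It is $K$-quasiconformal with $K=\frac{1+k}{1-k}$, and $\partial_z\chi\neq0$ a.e. Set $\mathcal F=\mathcal R\circ\chi^{-1}$ on $\chi(B_R)$; it is continuous, since $\mathcal R=h-vu$ is. By the chain rule for quasiconformal maps, $\mathcal F\in W^{1,2}_{\mathrm{loc}}$ and
\[
\partial_{\bar\zeta}\mathcal F=a\,\mathcal F,\qquad a=\frac{a_0}{\overline{\partial_z\chi}}\circ\chi^{-1}.
\]
This gives \eqref{eq:representation}.

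\emph{Third step: factor $\mathcal F$.} The coefficient $a$ is not bounded, but it lies in $L^{s}_{\mathrm{loc}}$ for some $s>2$. Indeed, $|a|^s=|a_0|^s|\partial_z\chi|^{-s}\circ\chi^{-1}$, and changing variables with Jacobian $J=|\partial_z\chi|^2-|\partial_{\bar z}\chi|^2\leq|\partial_z\chi|^2$ gives
\[
\int|a|^s\,d\zeta\leq\int|a_0|^s|\partial_z\chi|^{2-s}\,dz .
\]
This needs negative integrability of $|\partial_z\chi|$, which can instead be obtained by integrating over the inverse map: $\chi^{-1}$ is quasiconformal and its derivative is in $L^{q}_{\mathrm{loc}}$ for some $q>2$ (Bojarski), and $|a|\leq C|D\chi^{-1}|$ pointwise. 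So $a\in L^{q}_{\mathrm{loc}}$ with $q>2$.

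Now set $s=\mathcal C(a\chi_{D})$ on a compactly contained disk $D$. By Proposition~\ref{prop:transforms}(iii), $s$ is continuous and $\partial_{\bar\zeta}s=a$. Then $\mathcal G=e^{-s}\mathcal F$ is in $W^{1,2}_{\mathrm{loc}}$ and satisfies $\partial_{\bar\zeta}\mathcal G=0$ weakly, so by Weyl's lemma it is holomorphic. Covering $\chi(B_R)$ by such disks gives the local factorization $\mathcal F=e^{s}\mathcal G$.

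\emph{Fourth step: isolated zeros.} Under the extra hypothesis, Lemma~\ref{lem:remove} gives $\mathcal R\not\equiv0$ on every $B_r(z_0)$, hence $\mathcal F\not\equiv0$ on every open subset of the connected set $\chi(B_R)$. Suppose some local $\mathcal G$ vanished on an open set. Then $\mathcal F$ would vanish there, and the set of points near which $\mathcal F$ vanishes identically would be nonempty, open, and, by the identity theorem applied in overlapping disks, also closed in $\chi(B_R)$. Connectedness would force $\mathcal F\equiv0$, a contradiction.

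Hence at each zero $\zeta_1$ we have $\mathcal G=(\zeta-\zeta_1)^n g$ with $g(\zeta_1)\neq0$ and $n\geq1$, and we take $\Theta=e^{s}g$. The zeros of $\mathcal F$ are therefore isolated, and those of $\mathcal R$ are isolated because $\chi$ is a homeomorphism.

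The main obstacle is the third step: justifying the chain rule and the integrability of the transformed coefficient $a$ under a merely measurable $\tilde\mu$. If this proves delicate, I would fall back on citing Bojarski's Theorem 4.4 directly for the generalized solution $\mathcal R\in W^{1,2}\cap L^\infty$.
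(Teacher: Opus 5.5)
Your proof is correct. It differs from the paper mainly in how much is taken as a black box.

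The paper applies Bojarski's Theorem~4.4 (the Bers--Nirenberg representation) directly to the generalized solution $\mathcal R\in W^{1,2}\cap L^\infty$ of \eqref{eq:V-hom}. That theorem gives a global factorization $\mathcal R=e^{\sigma}(\mathcal G\circ\chi)$, with $\sigma$ continuous on $\overline{B_R}$ and $\mathcal G$ holomorphic on all of $\chi(B_R)$. Isolatedness then follows from the identity theorem on the connected set $\chi(B_R)$.

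You instead reprove that theorem by the classical route:
\begin{itemize}
\item absorb $\overline{\mathcal R}$ and $\overline{\partial_z\mathcal R}$ into solution-dependent coefficients $a_0$ and $\tilde\mu$ with $|\tilde\mu|\leq k$;
\item apply the measurable Riemann mapping theorem;
\item use the chain rule under a quasiconformal change of variables and the higher integrability of $D\chi^{-1}$;
\item conclude with the similarity principle via the Cauchy transform.
\end{itemize}
This makes transparent where $K=\frac{1+k}{1-k}$ comes from and uses tools close to Proposition~\ref{prop:transforms}. The price is several standard but nontrivial facts on quasiconformal maps: Lusin's property, a.e.\ invertibility of $D\chi$, and invariance of $W^{1,2}_{\mathrm{loc}}$.

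A few small corrections:
\begin{itemize}
\item The chain rule actually gives $a=\big(a_0/((1-|\tilde\mu|^2)\,\overline{\partial_z\chi})\big)\circ\chi^{-1}$, that is $|a|=|a_0\circ\chi^{-1}|\,|\partial_\zeta\chi^{-1}|$. This still satisfies your pointwise bound $|a|\leq C|D\chi^{-1}|$, so nothing downstream changes.
\item Lemma~\ref{lem:remove} gives $\mathcal R\not\equiv0$ only on disks centred at $z_0$, not on every open set. Your open--closed argument only needs $\mathcal F\not\equiv0$ on $\chi(B_R)$, so the conclusion stands.
\item Stampacchia is not needed in your first step, since $A\mathcal R+B\overline{\mathcal R}$ vanishes on $\{\mathcal R=0\}$ whatever value $a_0$ takes there.
\end{itemize}

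Finally, the patching over disks can be avoided. Since $\chi^{-1}$ is quasiconformal on all of $\mathbb C$ and $\chi(B_R)$ is bounded, $a\in L^q(\chi(B_R))$. So the single function $s=\mathcal C(a\,\mathbf 1_{\chi(B_R)})$ gives a global factorization of exactly the form the paper quotes.
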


\begin{proof}
On $B_R$ the coefficients of \eqref{eq:V-hom} satisfy $|\mu|+|\nu|\leq k<1$, while $A$ and $B$
are bounded, hence in $L^{s}(B_R)$ for every $s$; by \eqref{eq:V-reg}, $\mathcal R$ is a generalized
solution there. Bojarski's theorem~\cite[Theorem 4.4]{Bojarski} therefore gives
$\mathcal R=e^{\sigma}\,(\mathcal G\circ\chi)$ on $B_R$, with $\sigma$ continuous on
$\overline{B_R}$, $\mathcal G$ holomorphic on $\chi(B_R)$, and $\chi$ the normalized
solution of the associated Beltrami equation~\cite[(3.5), p.~464]{Bojarski}, whose coefficient is
extended by $0$ outside $B_R$: a homeomorphism of $\mathbb C$ onto $\mathbb C$ whose Beltrami coefficient is
bounded by $k$, that is, $K$-quasiconformal with $K=\frac{1+k}{1-k}$. Hence
$\mathcal R=\mathcal F\circ\chi$ with
\begin{equation*}
\mathcal F(\zeta)=e^{\sigma(\chi^{-1}(\zeta))}\,\mathcal G(\zeta),\qquad \zeta\in\chi(B_R) .
\end{equation*}
Being the product of a continuous factor which never vanishes and a holomorphic one, $\mathcal F$
is continuous on $\chi(B_R)$ and has the same zeros of $\mathcal G$.

If $u(z_0)=0$ and $u\not\equiv0$ on every disk $B_r(z_0)$, then $\mathcal R\not\equiv0$ in $B_R$ by
Lemma~\ref{lem:remove}, hence $\mathcal G\not\equiv0$ on the connected open set $\chi(B_R)$.
Its zeros are therefore isolated, and near each of them
\begin{equation*}
\mathcal G(\zeta)=(\zeta-\zeta_1)^n\,g(\zeta),\qquad n\geq1\ \mbox{an integer},\quad
g\ \mbox{holomorphic},\quad g(\zeta_1)\neq0
\end{equation*}
(see e.g.\ Rudin~\cite[Theorem 10.18]{Rudin}). On a disk around $\zeta_1$ where $g$ does not
vanish, the function $\Theta=e^{\sigma\circ\chi^{-1}}\,g$ is continuous and does not vanish either,
and
\begin{equation*}
\mathcal F(\zeta)=e^{\sigma(\chi^{-1}(\zeta))}\,\mathcal G(\zeta)=(\zeta-\zeta_1)^n\,\Theta(\zeta) .
\end{equation*}
Finally observe that $\chi^{-1}$ maps the zeros of $\mathcal F$ bijectively onto those of
$\mathcal R$, which are therefore isolated too. This concludes the proof.
\end{proof}

We conclude this section with a tool which turns out to be useful for the strong form of the
unique continuation property, Theorem~\ref{thm:ucp}(ii), as it prevents $\mathcal R$ from
vanishing at a zero $z_0$ faster than a power of $|z-z_0|$. By the factorization of
$\mathcal F$ at $\chi(z_0)$, near $z_0$ the modulus of $\mathcal R$ is bounded from below by a
multiple of $|\chi(z)-\chi(z_0)|^n$. It is therefore enough that $\chi$ contract distances no
faster than a power:
\begin{equation*}
|\chi(z)-\chi(z_0)|\ \geq\ c\,|z-z_0|^{K}
\qquad\Longrightarrow\qquad
|\mathcal R(z)|\ \geq\ c'\,|z-z_0|^{nK} .
\end{equation*}
The distortion bound on the left is provided by next lemma, which is based on the H\"older
continuity of quasiconformal mappings, due to Mori~\cite{Mori}.

\begin{lemma}[H\"older distortion]
\label{lem:holder}
Let $\chi$ be a $K$-quasiconformal homeomorphism of $\mathbb C$ onto $\mathbb C$, $K\geq1$. Then
for every $R>0$ there is $c>0$ such that
\begin{equation}
\label{eq:holder}
|\chi(z)-\chi(z')|\ \geq\ c\,|z-z'|^{K}\qquad\mbox{for every }z,z'\in\overline{B_R(0)} .
\end{equation}
\end{lemma}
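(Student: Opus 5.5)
The plan is to apply Mori's theorem to the inverse map. If $\chi$ is $K$-quasiconformal, then so is $\chi^{-1}$: it is an orientation-preserving homeomorphism of $\mathbb C$ onto $\mathbb C$ with the same maximal dilatation (see Lehto and Virtanen). Mori's theorem, in its global form, states that a $K$-quasiconformal map $\Psi$ of $\mathbb C$ onto $\mathbb C$ satisfies
\begin{equation*}
|\Psi(w)-\Psi(w')|\ \leq\ M\,|w-w'|^{1/K}
\end{equation*}
for all $w,w'$ in a bounded set $D$, with $M$ depending on $K$ and $D$. (Mori's normalized form is on the unit disk with constant $16$. The general case on compact sets follows by composing with affine maps, which does not change the dilatation, together with the local H\"older estimate on a slightly larger disk.) I will apply this to $\Psi=\chi^{-1}$ on the compact set $D=\chi(\overline{B_R(0)})$, which is bounded because $\chi$ is continuous.

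Given $z,z'\in\overline{B_R(0)}$, I set $w=\chi(z)$ and $w'=\chi(z')$, both in $D$. The H\"older bound for $\chi^{-1}$ then gives
\begin{equation*}
|z-z'|\ \leq\ M\,|\chi(z)-\chi(z')|^{1/K}.
\end{equation*}
Raising both sides to the power $K$ yields \eqref{eq:holder} with $c=M^{-K}$.

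The main obstacle is justifying the H\"older estimate on an arbitrary compact set rather than on Mori's normalized disk. I would handle it as follows. Cover $D$ by a disk $B_\rho(0)$ and use that $\chi^{-1}$ is $K$-quasiconformal on $B_{2\rho}(0)$. Mori's theorem, applied after rescaling $B_{2\rho}$ to the unit disk and normalizing the image by an affine map, gives the bound for $|w-w'|$ small. For $|w-w'|$ bounded below, the estimate is trivial with a larger constant, since $\chi^{-1}(D)=\overline{B_R(0)}$ has diameter $2R$. In that regime $|z-z'|\le 2R$ and $|w-w'|^{1/K}$ is bounded below by a positive constant.
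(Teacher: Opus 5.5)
Your proposal is correct and is essentially the paper's proof: $\chi^{-1}$ is again $K$-quasiconformal, and Mori's theorem in the Lehto--Virtanen form (uniform H\"older continuity with exponent $\frac1K$ on compact subsets of a domain), applied to $\chi^{-1}$ on $\chi\big(\overline{B_R(0)}\big)$, gives $|z-z'|\leq M|\chi(z)-\chi(z')|^{1/K}$, and raising to the power $K$ concludes. Your extra reduction to the normalized disk is unnecessary and slightly loose: $M$ depends on $\chi$, not only on $K$ and $D$, and no affine map turns $\chi^{-1}(B_{2\rho}(0))$ into a disk, so the disk-onto-disk version with constant $16$ does not apply directly (you would need the compact-subset form or a Riemann map); citing the compact-subset form directly, as the paper does, avoids this.
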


\begin{proof}
The inverse of a $K$-quasiconformal mapping is $K$-quasiconformal, see Lehto and
Virtanen~\cite[p.~17]{LehtoVirtanen}, so $\chi^{-1}$ is a $K$-quasiconformal mapping of
$\mathbb C$ onto $\mathbb C$. By Mori's theorem, in the form given by Lehto and
Virtanen~\cite[Theorem 4.3]{LehtoVirtanen}, a $K$-quasiconformal mapping of a
domain is uniformly H\"older continuous with exponent $\frac1K$ on every compact subset of that
domain. Hence, on the compact set $\chi\big(\overline{B_R(0)}\big)$, the mapping $\chi^{-1}$ is
H\"older continuous with exponent $\frac1K$: there is $C>0$ such that, for every
$z,z'\in\overline{B_R(0)}$,
\begin{equation*}
|z-z'|=\big|\chi^{-1}(\chi(z))-\chi^{-1}(\chi(z'))\big|\ \leq\ C\,|\chi(z)-\chi(z')|^{1/K} .
\end{equation*}
This concludes the proof.
\end{proof}

\section{Proof of Theorem A}
\label{sec:proofA}

Both items of Theorem~\ref{thm:ucp} follow from the representation of $\mathcal R=h-vu$ in
Lemma~\ref{lem:representation}. For (i) we use the fact that where $u$ vanishes identically, so do $h=\partial_zu$ and
$\mathcal R$, while the zeros of $\mathcal R$ are isolated near every point where $u$ vanishes
without vanishing identically. For (ii) we note that $\mathcal R$ does not vanish faster than a power of the
distance, by Lemma~\ref{lem:holder}, and a Caccioppoli inequality will help transfer this bound over our original solution
$u$.

\begin{proof}[Proof of Theorem~\ref{thm:ucp}(i)]
Let $u$ be a solution of \eqref{eq:eq-f} on a connected $\Omega$, vanishing on a nonempty open
subset of $\Omega$, and let
\begin{equation*}
\Omega_0=\{z\in\Omega:\ u\equiv0\ \mbox{on some disk around }z\}
\end{equation*}
be the interior of $u^{-1}(0)$. $\Omega_0$ is open by its definition and nonempty by hypothesis. It will be shown that $\Omega_0$ is closed in
$\Omega$, hence $\Omega_0=\Omega$, that is, $u\equiv0$ in $\Omega$.

 Let $z_0\in\Omega$ be the limit of
a sequence $(z_n)_{n\in\mathbb N}\subset\Omega_0$; then $u(z_0)=0$, since $u$ is continuous. Suppose
$z_0\notin\Omega_0$, so that $u\not\equiv0$ on every disk around $z_0$. Let $B_R(z_0)$ and $v$ be as
in Lemma~\ref{lem:remove} for the point $z_0$, and let $\mathcal R=h-vu$ be as in that lemma. By
Lemma~\ref{lem:representation} the zeros of $\mathcal R$ in $B_R(z_0)$ are isolated. On the other
hand, for $n$ large, $z_n\in B_R(z_0)$, and $u$ vanishes identically on a disk
$D\subset B_R(z_0)$ around $z_n$: there $h=\partial_zu$ vanishes as well, and so does
$\mathcal R=h-vu$. Every point of $D$ is then a zero of $\mathcal R$ which is not isolated, a
contradiction. Hence $z_0\in\Omega_0$, and $\Omega_0$ is closed in $\Omega$.
\end{proof}

Part (ii) of Theorem~\ref{thm:ucp} is a strong unique continuation principle, and the proof yields
\eqref{eq:finite-order} at each point, with $m=m(u,z_0)$. For instance a pointwise condition
$|u(z)|=O(|z-z_0|^j)$ for every $j$ implies that $u$ vanishes to infinite order at $z_0$, true for
$e^{-1/|z-z_0|},$ but not for a nonzero polynomial.

\begin{proof}[Proof of Theorem~\ref{thm:ucp}(ii)]
Let $u\not\equiv0$ and let $z_0\in\Omega$. Our proof here brakes into three steps, analysing each of the three cases which may occur separately, and prove for each of them that
\eqref{eq:finite-order} holds with an explicit exponent.

\emph{Step 1: the two cases where either $u$ or $\nabla u$ does not vanish at $z_0$.} If $u(z_0)\neq 0$, then
$|u|\geq\frac12|u(z_0)|$ on some disk around $z_0$, so that
\begin{equation*}
\int_{B_\rho(z_0)}|u|^p\,dx\ \geq\ c\,\rho^2\qquad\mbox{for }\rho\ \mbox{small} ,
\end{equation*}
which is \eqref{eq:finite-order} with $m=2$.
In the other case, since $u$ is differentiable and vanishes at $z_0$, by $u(z)=\nabla u(z_0)\cdot(z-z_0)+o(|z-z_0|)$, it follows that, for small $\rho$, $|u|\geq c\,\rho$ on the disk of radius
$\frac\rho4$ centred at distance $\frac\rho2$ from $z_0$ in the direction of $\nabla u(z_0).$ Since that disc is
contained in $B_\rho(z_0)$, integrating we obtain
\begin{equation*}
\int_{B_\rho(z_0)}|u|^p\,dx\ \geq\ c'\,\rho^{p+2} ,
\end{equation*}
which is \eqref{eq:finite-order} with $m=p+2$. We focus now on the case $z_0\in\Sigma_0$. We recall that
\[ Z=\{\nabla u=0\},\qquad \Sigma_0=Z\cap\{u=0\}.\]

\emph{Step 2: $z_0\in\Sigma_0$, and a bound from below for $\mathcal R$.} By
Theorem~\ref{thm:ucp}(i), $u\not\equiv0$ on every disk around $z_0$. Let $B_R(z_0)$ and $v$ be
as in Lemma~\ref{lem:remove} for the point $z_0$, and let $\mathcal R=h-vu$ be as in that lemma; $\mathcal R(z_0)=h(z_0)-v(z_0)u(z_0)=0$. By
Lemma~\ref{lem:representation}, $\mathcal R=\mathcal F\circ\chi$ on $B_R(z_0)$, with $\chi$
$K$-quasiconformal for $K=\frac1{p-1}$, and, with $\zeta_0=\chi(z_0)$, there are an integer
$n\geq1$ and $\sigma>0$ such that $\mathcal F(\zeta)=(\zeta-\zeta_0)^n\Theta(\zeta)$, with $\Theta$
continuous and nonvanishing on $\overline{B_\sigma(\zeta_0)}\subset\chi(B_R(z_0))$. Put
$c_1=\min_{\overline{B_\sigma(\zeta_0)}}|\Theta|>0$ and fix $\rho_0>0$ with
$\overline{B_{\rho_0}(z_0)}\subset\chi^{-1}\big(B_\sigma(\zeta_0)\big)$. By
Lemma~\ref{lem:holder}, applied on a disk centred at $0$ containing
$\overline{B_{\rho_0}(z_0)}$,
\begin{equation*}
|\mathcal R(z)|\ \geq\ c_1\,|\chi(z)-\zeta_0|^n\ \geq\ c'\,|z-z_0|^{nK}
\qquad\mbox{on }B_{\rho_0}(z_0) ,
\end{equation*}
so that, for $0<\rho<\rho_0$,
\begin{equation}
\label{eq:R-below}
\int_{B_\rho(z_0)}|\mathcal R|^p\,dx\ \geq\ c''\,\rho^{nKp+2} .
\end{equation}

\emph{Step 3: the Caccioppoli inequality, estimating $\nabla u$ by $u$.} Shrinking $\rho_0$ further if necessary,
assume $\overline{B_{2\rho_0}(z_0)}\subset\Omega$ and $2\rho_0\leq1$, and let $C_E$ be the constant of
\eqref{eq:f-growth} for $E=B_{2\rho_0}(z_0)$. For $0<\rho<\rho_0$ let
$\eta\in C_c^\infty(B_{2\rho}(z_0))$ with $0\leq\eta\leq1$, $\eta=1$ on $B_\rho(z_0)$ and
$|\nabla\eta|\leq\frac2\rho$. The function $\eta^pu$ is Lipschitz with compact support in
$\Omega$, hence admissible in \eqref{eq:eq-f} by density, and
\begin{equation*}
\int\eta^p|\nabla u|^p\,dx
=-p\int\eta^{p-1}u\,|\nabla u|^{p-2}\nabla u\cdot\nabla\eta\,dx+\int f(u)\,\eta^pu\,dx .
\end{equation*}
Here $|f(u)\eta^pu|\leq C_E\eta^p|u|^p$ by \eqref{eq:f-growth}, while Young's inequality gives
\begin{equation*}
p\,\eta^{p-1}|\nabla u|^{p-1}\,|u|\,|\nabla\eta|
\ \leq\ \tfrac12\,\eta^p|\nabla u|^p+C(p)\,|u|^p|\nabla\eta|^p .
\end{equation*}
Since $\rho\leq1$,
\begin{equation}
\label{eq:caccioppoli}
\int_{B_\rho(z_0)}|\nabla u|^p\,dx\ \leq\ \frac C{\rho^p}\int_{B_{2\rho}(z_0)}|u|^p\,dx ,
\qquad C=C(p,C_E) .
\end{equation}
See e.g.\ Lindqvist~\cite[pp.~9--10]{Lindqvist}.

\emph{Step 4: putting together \eqref{eq:R-below} and \eqref{eq:caccioppoli} on $\mathcal R$.} Since $|v|\leq1$ and $|h|=\frac12|\nabla u|$, we have
$|\mathcal R|\leq\frac12|\nabla u|+|u|$, hence
$|\mathcal R|^p\leq2^{p-1}\big(2^{-p}|\nabla u|^p+|u|^p\big)\leq|\nabla u|^p+2|u|^p$, as $p<2$. By
\eqref{eq:caccioppoli}, and since $\rho\leq1$, for $0<\rho<\rho_0$
\begin{equation*}
\int_{B_\rho(z_0)}|\mathcal R|^p\,dx\ \leq\ \int_{B_\rho(z_0)}|\nabla u|^p\,dx+2\int_{B_\rho(z_0)}|u|^p\,dx
\ \leq\ \frac{C+2}{\rho^p}\int_{B_{2\rho}(z_0)}|u|^p\,dx ,
\end{equation*}
and by \eqref{eq:R-below}
\begin{equation*}
\int_{B_{2\rho}(z_0)}|u|^p\,dx\ \geq\ \frac{c''}{C+2}\,\rho^{nKp+p+2} ,
\end{equation*}
hence \eqref{eq:finite-order} follows with
$m=nKp+p+2=\frac{np}{p-1}+p+2$ and for some $c$ depending on $m$ and on the previous constants.  

\emph{Conclusion of the proof of (ii).} If $u\not\equiv0$ vanished
to infinite order at $z_0$, then, $m$ and $c$ being those just produced,
$c\,\rho^{m}\leq C_j\,\rho^{j}$ for small $\rho$ with $j>m$, which fails as $\rho\to0$, and this concludes the proof. 
\end{proof}

\section{Nonexistence on the half-plane: proof of Theorem B}
\label{sec:result}

\begin{lemma}[{\cite[Lemma 2.3]{MercuriWillem}}; {\cite[Lemma 2.1]{FarinaMercuriWillem}}]
\label{lem:normal}
Let $1<p<N$ and let $u\in\DR_0^{1,p}(H)$ be a weak solution of \eqref{eq:main}. Then $u\in C^{1,\alpha}_{{\textrm{loc}}}(\overline H)$ for some $\alpha\in(0,1)$, and
\begin{equation}
\label{eq:normal}
\partial_Nu=0\qquad\mbox{everywhere on }\partial H .
\end{equation}
\end{lemma}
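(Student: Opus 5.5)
The plan has two parts: first obtain regularity up to the boundary, then derive the vanishing of the normal derivative by comparison with a Hopf-type barrier argument.

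\emph{Regularity.} First I would show that $u\in L^\infty(H)$. Extend $u$ by zero to $\R^N$ and run a Moser/Trudinger iteration, testing with truncated powers $u\min(|u|,L)^{p(s-1)}$. The critical term $|u|^{p^*-2}u$ is handled as a potential $V=|u|^{p^*-p}\in L^{N/p}$. Its norm is small outside a bounded set and after truncating large values, which is the Brezis--Kato trick adapted to the $p$-Laplacian as in Peral~\cite[Appendix E]{Peral}. Once $u$ is bounded, the right-hand side is bounded. The flat boundary together with the zero Dirichlet condition lets me invoke Lieberman's boundary regularity, or equivalently DiBenedetto--Tolksdorf after odd reflection of the flattened problem, to get $u\in C^{1,\alpha}_{\rm loc}(\overline H)$. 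Because $u=0$ on $\partial H$, the tangential derivatives vanish there. So $\nabla u=(0,\dots,0,\partial_Nu)$ on $\partial H$, and it remains to show that $\partial_Nu$ vanishes.

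\emph{Vanishing of $\partial_Nu$: Pohozaev-type identity.} The natural tool is a Pohozaev identity for translations in the $x_N$ direction. I would multiply the equation formally by $\partial_N u$ and integrate over $H\cap\{x_N>\varepsilon\}$, or alternatively use domain variations $u(x+te_N\eta(x))$ with cutoffs $\eta$, which is cleaner because only $C^{1,\alpha}$ regularity is available. The divergence identity is
\[
\div\Big(|\nabla u|^{p-2}\nabla u\,\partial_Nu-\frac{|\nabla u|^p}{p}e_N+\frac{|u|^{p^*}}{p^*}e_N\Big)=0 .
\]
Integrating it over $H$, with cutoffs $\eta_R$ that are removed using finite energy, gives a boundary term equal to $\big(1-\frac1p\big)\int_{\partial H}|\partial_Nu|^p$, and this must be zero. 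The difficulty is justifying the identity, since second derivatives of $u$ are only available in $W^{2,2}_{\rm loc}$ in the interior (Section~\ref{sec:reg}). I would therefore integrate on $\{x_N>\varepsilon\}$ and let $\varepsilon\to0$, using uniform $C^{1,\alpha}$ bounds near $\partial H$ on compacts. Alternatively, I would regularize with $(\epsilon^2+|\nabla u|^2)^{(p-2)/2}$ solutions and pass to the limit.

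\emph{Tails and the pointwise statement.} The tail terms at infinity vanish along a sequence $R_k\to\infty$, because $|\nabla u|^p+|u|^{p^*}\in L^1$. This yields $\int_{\partial H}|\partial_Nu|^p=0$. Since $\partial_Nu$ is continuous up to $\partial H$, it then vanishes everywhere on $\partial H$, which is \eqref{eq:normal}.

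The main obstacle is the rigorous justification of the Pohozaev identity near $\partial H$ with only $C^{1,\alpha}$ regularity. To handle it I would first try the $\varepsilon$-layer approach, combined with interior $W^{2,2}$ regularity and the dominated convergence theorem.
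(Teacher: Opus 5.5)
Your proposal follows essentially the same route as the paper, which defers to \cite[Lemma 2.3]{MercuriWillem} and \cite[Lemma 2.1]{FarinaMercuriWillem}: first boundedness and $C^{1,\alpha}$ regularity up to $\partial H$, then a local Pohozaev identity for translations in $x_N$ with cutoffs, whose boundary term $(1-\frac1p)\int_{\partial H}|\partial_Nu|^p$ is killed by finite energy; the only difference is that the paper gets the regularity from the odd reflection $\bar u$, a weak solution on $\R^N$, via Trudinger--Serrin type bounds and DiBenedetto--Tolksdorf, rather than from your global Brezis--Kato iteration plus boundary regularity. One caveat is that your $\varepsilon$-layer justification needs $W^{2,2}_{\mathrm{loc}}$, which Section~\ref{sec:reg} provides only for $1<p<2$ (the case the paper actually uses), so for $2\leq p<N$ you should rely on the domain-variation version instead: it does work with $C^1$ regularity alone, because convexity of $\xi\mapsto|\xi|^p/p$ together with the weak equation tested with $u_t-u$, where $u_t(x)=u(x+t\eta(x)e_N)$ and $\eta\in C^1_c(H)$, gives $J(u_t)\geq J(u)-o(|t|)$ for $t$ of both signs, which forces $\frac{d}{dt}J(u_t)|_{t=0}=0$.
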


We recall that the statement on regularity can be achieved as in \cite[p.~472]{MercuriWillem}, considering the odd reflection $\bar u$
of $u$ across $\partial H,$ a weak solution of \eqref{eq:main} on $\R^N$. It turns out that
$\bar u$ is locally bounded. The bound is obtained as in Trudinger's regularity theorem for the
Yamabe equation~\cite[Theorem 3]{Trudinger}. Tested against truncated powers of $\bar u$,
the equation gives $\bar u\in L^{qp^*}_{{\textrm{loc}}}(\R^N)$ for some $q>1$; hence
$-\Delta_p\bar u=a\,|\bar u|^{p-2}\bar u$ with $a=|\bar u|^{p^*-p}\in L^r_{{\textrm{loc}}}(\R^N)$, $r>N/p$, and
the solutions of such an equation are bounded by the truncation method of
Stampacchia~\cite[Lemme 4.1, p.~212; Th\'eor\`eme 4.1 and Remarque 4.2, pp.~213--215]{Stampacchia};
in this subcritical situation, $a\in L^r$ with $r>N/p$, the bound for quasilinear equations is the
$L^\infty$ estimate of Serrin~\cite{Serrin}, as indicated in Peral~\cite[p.~97]{Peral}.
For the $p$-Laplacian the two steps are performed in Appendix E of Peral's lecture notes~\cite[pp.~97--102]{Peral}. Hence, it holds that $\bar u\in C^{1,\alpha}_{{\textrm{loc}}}(\R^N)$ by DiBenedetto~\cite[p.~830]{DiBenedetto} and
Tolksdorf~\cite[Theorem 1]{Tolksdorf}. For the present range $1<p<2,$ we are not assuming any sign condition on the solutions.

\begin{proof}[Proof of Theorem~\ref{thm:main}]
Let $N=2$, $1<p<2$, and let $u\in\DR_0^{1,p}(H)$ solve \eqref{eq:main}. By Lemma~\ref{lem:normal},
$u$ and $\nabla u$ vanish on $\partial H$, and we extend $u$ by zero to $\R^2\setminus H$. We claim
that $u$ is a weak solution of $-\Delta_pu=|u|^{p^*-2}u$ in $\R^2$. Let $\varphi\in C_c^\infty(\R^2)$,
and let $\eta\in C^\infty(\R)$ with $\eta=0$ on $(-\infty,1]$, $\eta=1$ on $[2,\infty)$,
$\eta_k(x)=\eta(kx_2)$. Then $\varphi\eta_k\in C_c^\infty(H)$ is an admissible test function in
\eqref{eq:main}:
\begin{equation}
\label{eq:test-k}
\int_H\eta_k\,|\nabla u|^{p-2}\nabla u\cdot\nabla\varphi
+\int_H\varphi\,|\nabla u|^{p-2}\partial_2u\;k\eta'(kx_2)
=\int_H|u|^{p^*-2}u\,\varphi\,\eta_k .
\end{equation}
The second integral is in fact over a truncated shrinking strip
$S_k=\{0<x_2<2/k\}\cap\operatorname{supp}\varphi$, where $|\nabla u|^{p-1}\leq\varepsilon_k\to0$, since
$\nabla u$ is continuous up to $\partial H$ and vanishes there, while
$\int_{S_k}k|\eta'(kx_2)|\,dx\leq\|\eta'\|_\infty\,\mathcal H^{1}(\pi(\operatorname{supp}\varphi))$,
$\pi$ the orthogonal projection onto $\partial H$; so it tends to $0$. Letting $k\to\infty$
in \eqref{eq:test-k} by the dominated convergence theorem,
\begin{equation*}
\int_{\R^2}|\nabla u|^{p-2}\nabla u\cdot\nabla\varphi=\int_{\R^2}|u|^{p^*-2}u\,\varphi .
\end{equation*}
Since $u$ vanishes on the open half-plane $\R^2\setminus\overline H$, and $f(s)=|s|^{p^*-2}s$
satisfies \eqref{eq:f-growth} with $C_M=M^{p^*-p}$, Theorem~\ref{thm:ucp}(i) gives $u\equiv0$.
\end{proof}

\begin{rem}
\label{rem:local}
It is possible to relax the finite energy assumption as follows. Arguing as in
\cite[ p.~473]{MercuriWillem} and \cite[ p.~6]{FarinaMercuriWillem}, it is
easy to see that the local Pohozaev identity used in Lemma~\ref{lem:normal}, together with the
$C^{1,\alpha}_{{\textrm{loc}}}(\overline H)$ regularity, hold for every
$u\in W^{1,p}_{{\textrm{loc}}}(\overline H)$ solving \eqref{eq:main} with zero trace on
$\partial H$, and that $\partial_2u=0$ on $\partial H$ provided
\begin{equation*}
\liminf_{R\to\infty}\ \frac1R\int_{H\cap B_R}\big(|\nabla u|^p+|u|^{p^*}\big)\,dx=0 ,
\end{equation*}
since otherwise the integral is bounded below by $cR$ for $R$ large. For $p=2$ this corresponds to
\cite[Remark I.11]{EstebanLions}, where their Theorem I.1 is proved for
$u\in W^{2,q}(\Omega\cap B_R)$, $q<\infty$, for every $R>0$, provided
$\int_{\Omega\cap\partial B_{R_n}}(|F(u)|+|Du|^2)\,ds\to0$ along a sequence $R_n\to\infty$. Theorem~\ref{thm:main}
holds therefore for such local solutions. We point out that some condition at infinity is needed: for
a periodic solution $w$ of $(|w'|^{p-2}w')'+|w|^{p^*-2}w=0$ with $w(0)=0$ and $w'(0)\neq0$,
$u(x)=w(x_2)$ is a bounded solution of \eqref{eq:main} with $\partial_2u\equiv w'(0)$ on
$\partial H$.
\end{rem}

\section{Nonexistence in star-shaped domains: proof of Theorem C}
\label{sec:star}

\begin{proof}[Proof of Theorem~\ref{thm:star}]
We may assume $x_0=0$.

\emph{Step 1: regularity up to the boundary, and the Pohozaev identity.} The regularity holds up to the boundary,
$u\in C^{1,\alpha}(\overline \Omega)$ for some $\alpha\in(0,1)$, by Guedda and
V\'eron~\cite[Corollary 1.1]{GueddaVeron}, who provide in the same
paper~\cite[Theorem 1.1]{GueddaVeron} the Pohozaev-type identity
\begin{equation}
\label{eq:pohozaev}
\begin{split}
&\Big(\frac 2{p^*}+1-\frac 2p\Big)\int_\Omega|u(x)|^{p^*}dx
=\Big(1-\frac1p\Big)\int_{\partial \Omega}\big(x\cdot\nu(x)\big)\,\big|\partial_\nu u(x)\big|^p\,d\sigma(x).
\end{split}
\end{equation}
where the left hand side is zero by definition of $p^*$.

\emph{Step 2: $\nabla u$ vanishes on part of the boundary.} As $u=0$ on $\partial \Omega$ and
$u\in C^1(\overline \Omega)$, $\nabla u=(\partial_\nu u)\,\nu$ on $\partial \Omega$. By
\eqref{eq:star-shaped} the integrand on the right of \eqref{eq:pohozaev} is continuous and
nonnegative, and its integral vanishes; hence
\begin{equation}
\label{eq:Gamma}
\nabla u=0\qquad\mbox{on }\Gamma=\{x\in\partial \Omega:\ x\cdot\nu(x)>0\},
\end{equation}
a relatively open subset of $\partial \Omega$. It is nonempty: if $\overline{B_\rho(0)}$ is the
smallest closed disk centred at $0$ containing $\overline \Omega$ and
$y\in\partial \Omega\cap\partial B_\rho(0)$, then $\Omega\subset B_\rho(0)$ and the $C^1$ regularity
of $\partial \Omega$ give $\nu(y)=y/\rho$, so $y\cdot\nu(y)=\rho>0$
(Guedda and V\'eron~\cite[p.~886]{GueddaVeron}).

\emph{Step 3: extending $u$ as a solution across a piece of $\Gamma$ and unique continuation.} Fix $y\in\Gamma$. Since
$\partial \Omega$ is $C^2$, after a rigid motion there are a disk $B=B_r(y)$ and $\gamma\in C^2(\R)$
with
\begin{equation}
\label{eq:graph}
\Omega\cap B=\{x\in B:\ x_2>\gamma(x_1)\},\qquad
\partial \Omega\cap B=\{x\in B:\ x_2=\gamma(x_1)\}\subset\Gamma .
\end{equation}
Let $\widetilde\Omega=\Omega\cup B$, and extend $u$ by zero on $B\setminus\Omega$. By \eqref{eq:Gamma} and \eqref{eq:graph},
$u\in C^1(\widetilde\Omega)\cap L^\infty(\widetilde\Omega)$. We claim also that is a weak solution to
to the same equation in $\widetilde\Omega.$ In fact, for $\varphi\in C_c^\infty(B)$,
with $\eta$ as in the proof of Theorem~\ref{thm:main} and $\eta_k(x)=\eta(k(x_2-\gamma(x_1)))$, the
function $\varphi\eta_k$ is $C^1$ with compact support in $\Omega\cap B$, hence it is an admissible
test function for \eqref{eq:eq-Omega}:
\begin{equation*}
\int_{\Omega\cap B}\eta_k\,|\nabla u|^{p-2}\nabla u\cdot\nabla\varphi\,dx
+\int_{\Omega\cap B}\varphi\,|\nabla u|^{p-2}\nabla u\cdot\nabla\eta_k\,dx
=\int_{\Omega\cap B}|u|^{p^*-2}u\,\varphi\,\eta_k\,dx ,
\end{equation*}
and letting $k\to\infty$ as in the proof of Theorem~\ref{thm:main} we obtain
\begin{equation*}
\int_B|\nabla u|^{p-2}\nabla u\cdot\nabla\varphi\,dx=\int_B|u|^{p^*-2}u\,\varphi\,dx .
\end{equation*}
We conclude by Theorem~\ref{thm:ucp}(i) with $C_M=M^{p^*-p}$ that $u\equiv0$ on $\Omega$ too, and this concludes the proof.
\end{proof}

\section{Blow-up analysis in the plane: proof of Theorems D and E}
\label{sec:compact}

\begin{proof}[Proof of Theorem~\ref{thm:compact}]
Since the proof would be a repetition of that of \cite[Theorem 1.2, p.~471]{MercuriWillem}, we leave it out. Its
proof is in the spirit of Willem~\cite[Theorem 8.13, pp.~128--129]{Willem} for $p=2$. We complete the
proof with two remarks as follows.\newline
\emph{Boundedness of PS sequences.} This is proved for simplicity in \cite{MercuriWillem}, under an extra assumption. We can remove it, by the classical argument of Brezis and
Nirenberg~\cite[p.~463]{BrezisNirenberg}, noting that
\begin{equation}
\label{eq:PS-bound}
\Big(\frac1p-\frac1{p^*}\Big)\int_\Omega|u_n|^{p^*}dx
=J_\Omega(u_n)-\frac1p\langle J_\Omega'(u_n),u_n\rangle\ \leq\ c+1+\|u_n\| ,
\end{equation}
and that by H\"older's inequality with $\frac p2+\frac p{p^*}=1$,
\begin{equation}
\label{eq:PS-bound2}
\begin{split}
\|u_n\|^p&=p\,J_\Omega(u_n)-\int_\Omega a|u_n|^p\,dx+\frac p{p^*}\int_\Omega|u_n|^{p^*}dx\\
&\leq\ p(c+1)+\|a\|_{L^{2/p}(\Omega)}\|u_n\|_{L^{p^*}(\Omega)}^p+\frac p{p^*}\int_\Omega|u_n|^{p^*}dx .
\end{split}
\end{equation}
By \eqref{eq:PS-bound} it follows then that the right hand side of \eqref{eq:PS-bound2} is at most
$C\big(1+(1+\|u_n\|)^{p/p^*}+\|u_n\|\big)$, and $p/p^*<1<p,$ hence $(u_n)_{n\in\mathbb N}$ is bounded in
$W_0^{1,p}(\Omega)$. 

\emph{Half-plane profiles vanish.} A half-plane is produced by the blow-up at a concentration
point when the points $y_n\in\Omega$ and the dilations $\varepsilon_n$ satisfy, along a
subsequence,
\begin{equation}
\label{eq:halfplane-blowup}
\varepsilon_n\to0,\qquad
\frac{\operatorname{dist}(y_n,\partial\Omega)}{\varepsilon_n}\ \mbox{bounded} :
\end{equation}
then $y_n$ converges to a
point of $\partial\Omega$ and the rescaled functions
$\varepsilon_n^{\frac{2-p}p}u_n(\varepsilon_n\,\cdot+y_n)$ converge to a solution of the limiting
equation on a half-plane, vanishing on its boundary (\cite[pp.~480--481]{MercuriWillem}). In the opposite case
$\operatorname{dist}(y_n,\partial\Omega)/\varepsilon_n\to\infty$ the limit solves the equation on
all of $\R^2$ (\cite[p.~477]{MercuriWillem}), and these are the profiles mentioned in
\eqref{eq:decomposition}. Let now $v\in\DR_0^{1,p}(H)$ be a weak solution of
\eqref{eq:main}, the half-plane of the limit being brought to $H$ if necessary by a translation and a rotation.
By Theorem~\ref{thm:main}, $v\equiv0$. This contradicts the nontriviality of the rescaled limit,
ensured by the choice of $y_n$ and $\varepsilon_n$ (\cite[step 3 of the proof of Theorem 1.2,
p.~479]{MercuriWillem}); hence \eqref{eq:halfplane-blowup} never occurs, and it holds that
$\operatorname{dist}(y_n,\partial\Omega)/\varepsilon_n\to\infty.$ 
\end{proof}

\begin{proof}[Proof of Theorem~\ref{thm:chernysh}]
The profiles on the half-space of Chernysh~\cite[Theorem 2, p.~3]{Chernysh} are nontrivial finite energy
solutions, for
$N=2$ to $-\Delta_pu=|u|^{p^*-2}u$ in $H.$ 
By
Theorem~\ref{thm:main} every such solution vanishes, so that we may conclude that no such profile occurs.
\end{proof}

\section*{Acknowledgements}
Carlo Mercuri is a member of the group GNAMPA of Istituto Nazionale di Alta Matematica (INdAM).

\section*{Statements and Declarations}

\noindent\textbf{Funding.} The author did not receive support from any organization for the submitted work.

\noindent\textbf{Competing interests.} The author has no relevant financial or non-financial interests to disclose.

\end{document}